% ----------------------------------------------------------------
% AMS-LaTeX Paper ************************************************
% **** -----------------------------------------------------------
\documentclass[12pt]{amsart}
\usepackage[margin=1in,centering]{geometry}
\usepackage[all]{xy}
\usepackage{amssymb}
\usepackage{graphicx,epsfig}
\usepackage[usenames,dvipsnames]{color}
\usepackage[implicit=true,pagebackref=true,%
  colorlinks=true,linkcolor=Blue,citecolor=PineGreen,urlcolor=BrickRed]{hyperref}
%\usepackage{psfrag}

%\usepackage{amssymb,latexsymb}
% ----------------------------------------------------------------
\vfuzz2pt % Don't report over-full v-boxes if over-edge is small
\hfuzz2pt % Don't report over-full h-boxes if over-edge is small
% THEOREMS -------------------------------------------------------
\newtheorem{thm}{Theorem}[section]
\newtheorem{cor}[thm]{Corollary}
\newtheorem{lem}[thm]{Lemma}
\newtheorem{prop}[thm]{Proposition}
\newtheorem{prob}{Problem}
\theoremstyle{definition}

\theoremstyle{remark}

\newtheorem{conj}{Conjecture}

\numberwithin{equation}{section}

% MATH -----------------------------------------------------------

% ----------------------------------------------------------------
\begin{document}

\title{On the  Gap Conjecture concerning Group Growth}
\author{Rostislav Grigorchuk}
\address{Department of Mathematics;
Mailstop 3368; Texas A\&M University; College Station, TX 77843-3368, USA }

\email{grigorch@math.tamu.edu}

\thanks{The author is partially supported  by the Simons Foundation and by NSF grant DMS - 1207699.}
%\subjclass[2010]{20F50,20F55,20E08} \keywords{growth of group, growth function, growth series, Milnor
%problem, polynomial growth, intermediate growth, exponential growth, uniformly exponential growth,  gap
%conjecture, amenable group, self-similar group, branch group, group generated by finite automaton}

%\date{}%
%\dedicatory{Dedicated to John Milnor on the occasion of his  80th birthday.}
%\commby{}%
% ----------------------------------------------------------------
\begin{abstract}   We discuss some new results concerning  Gap Conjecture on group growth
 and present a reduction of
it   (and its $*$-version) to several special classes of groups. Namely we show that its validity for the
classes of simple groups and residually finite groups will imply the Gap Conjecture in full generality. A
similar type reduction holds if the Conjecture is valid for residually polycyclic groups and  just-infinite
groups. The cases of residually solvable groups and right orderable groups are considered as well.
\end{abstract}
\maketitle
% ----------------------------------------------------------------

\section{Introduction}

Growth functions of finitely generated groups were introduced by  A.S.~Schvarz \cite{schvarts:55} and
independently by J.~Milnor \cite{milnor:note68},  and remain  popular subject of geometric group theory.
Growth of a finitely generated group can be polynomial, exponential or intermediate between polynomial and
exponential. The class of groups of polynomial growth coincides with the class of virtually nilpotent groups
as was conjectured by Milnor  and confirmed by M.~Gromov \cite{gromov:poly_growth}.   Milnor's problem on the
existence of groups of intermediate growth was solved by the author in
\cite{grigorch:degrees,grigorch:degrees85}, where for any prime $p$  an uncountable family of 2-generated
torsion $p$-groups $\mathcal G_{\omega}^{(p)}$    with different types of  intermediate  growth was
constructed. Here $\omega$ is a parameter of construction taking values in the space of infinite sequences
over the alphabet on $p+1$ letters. All groups $\mathcal G_{\omega}^{(p)}$ satisfy the following lower bound
on growth function
\begin{equation} \label{gapineq}\gamma_{\mathcal G_{\omega}}(n)\succeq e^{\sqrt n},
\end{equation}
where $\gamma_G(n)$ denotes the growth function of a group $G$ and $\succeq$ is a natural comparison of
growth functions  (see the next section for definition).  The inequality (\ref{gapineq}) just indicates that
growth of a group is not less than the growth of the function $e^{\sqrt n}$.

All groups from families  $\mathcal G_{\omega}^{(p)}$   are residually finite-$p$ groups (i.e. are
approximated by finite $p$-groups). In \cite{grigorch:hilbert} the author proved that the lower bound
(\ref{gapineq}) is universal for all residually finite-$p$ groups and this fact has a straightforward
generalization to residually nilpotent groups, as it is indicated in \cite{lubotzky_mann:polyn91}.

The paper \cite{grigorch:degrees85} also contains an example of a torsion free group of intermediate growth,
which happened to be right orderable group, as was shown in \cite{grigorch_machi:93}.   For this group the
lower bound (\ref{gapineq})  also holds.

In the ICM Kyoto paper~\cite{grigorch:ICM90} the author raised a question if the function  $e^{\sqrt n}$
gives a universal lower bound for all groups of intermediate growth. Moreover, later he conjectured that
indeed this is the case.  The corresponding conjecture is now called  the \emph{Gap Conjecture} on group
growth. In this note we collect known  facts related to the Conjecture  and present some new results.  A
recent paper \cite{grigorch:milnor11}  gives further information about the history and developments around
the notion of growth in group theory.

The first part of the note is introductory.  The second part begins with the case of  residually solvable
groups where basically we present some of  results of J.S.~Wilson
from~\cite{wilson:growthsolv05,wilson:gap11} and a consequence from them. Then we consider the case of right
orderable groups, and the final part contains two reductions of the Conjecture (and its $*$-version) to the
classes of residually finite groups and simple groups (Theorem \ref{resid}), and to the class of
just-infinite groups, modulo its correctness  for residually polycyclic groups (Theorem \ref{just-inf}).
% At the end we discuss gap type
%conjectures associated with other asymptotic characteristics of groups:  probabilities $P(n)$ of return for a
%random walk,   F\"{o}lner function $\mathcal F (n)$ and spectral function $\mathcal N (\lambda)$.  We  show
%that basically these conjectures are equivalent to the  Gap Conjecture on group growth and this implies their
%validity for certain classes of groups as well as the reduction type results. We emphasize  that the gap type
%conjectures related to these asymptotic characteristics  were formulated by P.~Pansu and author around 2000
% but the corresponding note was not written in full at that
%time.

 \section{Ackonwledgment} This work was
completed during  visit of the author to the Institute Mittag-Leffler (Djursholm, Sweden) associated with the
program ``Geometric and Analytic Aspects of Group Theory''.   The author acknowledges organizers of this
program.
 Also the author  would like to thank A.~Mann for indication  of the article
\cite{bajorska_maked:note07}, and   I.~Bondarenko and E.~Zelmanov for numerous valuable remarks concerning
the first draft of this note.

\section{Preliminary facts}\label{prelimin}

Let $G$ be a finitely generated group with a system of generators $A=\{a_1,a_2,\dots,a_m\}$ (throughout the
paper we consider only infinite finitely generated groups and only finite systems of generators). The
\emph{length} $|g|=|g|_A$ of an element $g\in G$ with respect to $A$ is the length $n$ of the shortest
presentation of $g$ in the form
\[g=a^{\pm 1}_{i_1}a^{\pm 1}_{i_2}\dots a^{\pm 1}_{i_n},\]
where $a_{i_j}$ are elements in $A$. It depends on the set of generators, but for any two systems of
generators $A$ and $B$ there is a constant $ C \in \mathbb{N}$ such that the inequalities
\begin{equation} \label{length} |g|_A \leq C|g|_B,  \qquad |g|_B \leq C|g|_A.
\end{equation}
hold.

The \emph{growth function} of a  group $G$ with respect to  the generating set $A$ is the function
\[\gamma_G^A(n)=\bigl|\{g\in G : |g|_A\leq n\}\bigr|,\]
where $|E|$ denotes the cardinality of a set $E$, and $n$ is a natural number.

If $\Gamma=\Gamma(G,A)$ is the Cayley graph of a group $G$ with respect to the generating set $A$, then $|g|$
is the combinatorial distance between vertices $g$ and  $e$ (the identity element in $ G$), and
$\gamma_G^A(n)$ counts the  number of vertices at combinatorial distance $\leq n$
 from  $e$ (i.e., it counts the number of elements in the
ball of radius $n$ with  center at the identity element).

It follows from (\ref{length}) that  growth functions $\gamma_G^A(n), \gamma_G^B(n)$ satisfy  the
inequalities
\begin{equation}\label{growth} \gamma_G^A(n) \leq  \gamma_G^B(Cn), \qquad
\gamma_G^B(n) \leq \gamma_G^A(Cn).
\end{equation}

The dependence of  the growth function on   generating set is  inconvenience and it is customary to avoid it
by using the following trick.  Two functions on the naturals $\gamma_1$ and $\gamma_2$ are called
\emph{equivalent} (written $\gamma_1\sim \gamma_2$)
 if there is a constant
$C\in \mathbb{N}$ such that $ \gamma_1(n) \leq  C\gamma_2(Cn)$, $\gamma_2(n) \leq  C\gamma_1(Cn)$ for all
$n\geq 1.$ Then according to (\ref{growth}), the growth functions constructed with respect to two different
systems of generators are equivalent. The class of equivalence $[\gamma_G^A]$ of  growth function  is called
 \emph{degree of growth}, or   \emph{rate of growth} of  $G$. It is an invariant  not
only up to   isomorphism but also  up to  weaker equivalence relation called \emph{quasi-isometry}
\cite{harpe:topics}.

\smallskip

We will also consider a preoder $\preceq  $ on the set of growth functions:
\begin{equation}\label{preoder}\gamma_1(n) \preceq \gamma_2(n)
\end{equation}
if there is an integer $C>1$ such that $\gamma_1(n) \leq \gamma_2(Cn)$ for all $n\geq 1$. This converts the
set $\mathcal{W}$ of growth degrees of finitely generated groups  into a partially ordered set. The notation
$\prec$ will be used in this article  to indicate a strict inequality.

\bigskip
Let us remind some basic facts about  growth rates that will be used in the paper.
\begin{itemize}
\item  The power functions $n^{\alpha}$ belong to different equivalence classes for different $\alpha
    \geq 0$.
\item  The polynomial function $P_d(n)=c_dn^d+ \dots +c_1n+c_0,$ where $c_d \neq  0$ is equivalent to the
    power function $n^d$.
\item All exponential  functions $\lambda^n, \lambda > 1$ are equivalent and belong to the class $[e^n]$.
\item All functions of \emph{intermediate type}  $e^{n^{\alpha}}, 0< \alpha <1$ belong to different
    equivalence classes.

\end{itemize}
This is not a complete list of rates of growth that a group may have.   Much more is provided in
\cite{grigorch:degrees} and \cite{bartholdi_ersch:given(2)11}.
%Moreover we will also consider functions of positive argument $\lambda$  approaching zero when $\lambda \to
%0+$ and apply a similar comparison techniques that would  allow us to speak about the rate of decay when
%$\lambda \to 0+$.

It is easy to see that  growth of a group coincides with the growth of a subgroup of finite index, and that
growth of a group is not smaller than the growth of a finitely generated subgroup or of a factor group. Since
a group with $m$ generators can be presented as a quotient group of a free group of rank $m$, the growth of a
finitely generated group cannot be faster than exponential (i.e., it can not be superexponential). Therefore
we can split the growth types into three classes:
\begin{itemize}
\item \emph{Polynomial} growth. A group $G$ has  \emph{polynomial} growth if there are constants $C>0$
    and $d> 0$ such that $\gamma(n) < Cn^d$ for all $ n\geq 1.$  Minimal  $d$ with this property is
    called the degree of polynomial growth.

\item \emph{Intermediate} growth.   A group $G$ has  \emph{intermediate}  growth if $\gamma(n)$ grows
    faster than  any polynomial but slower than  any exponent function $\lambda^n, \lambda >1$ (i.e.
    $\gamma(n) \prec e^n$).
\item \emph{Exponential growth}.  A group $G$ has  \emph{exponential} growth if  $\gamma(n)$ is
    equivalent to $e^n$.
\end{itemize}

 The  question on the existence of groups of intermediate growth was raised in
     1968 by Milnor \cite{milnor:problem}. For many classes of groups (for instance for linear groups by Tits
     alternative \cite{tits:alternative72}, or for solvable groups by the results of Milnor
     \cite{milnor:solv68} and Wolf \cite{wolf:growth68}) intermediate growth is impossible.
      Milnor's question was answered by  author in 1983 \cite{grigorch:milnor,grigorch:degrees,grigorch:habil}, where it
     was shown that there are uncountably many 2-generated torsion  groups of intermediate growth. Moreover,
     it was shown in \cite{grigorch:degrees,grigorch:degrees85,grigorch:habil} that for any prime $p$ a  partially ordered set $\mathcal W_p$ of growth
     degrees of finitely generated torsion p-groups  contains  uncountable chain
      and contains  uncountable anti-chain. The immediate
     consequence
     of this result is the existence
     of  uncountably many  quasi-isometry equivalence classes of finitely generated groups (in fact 2-generated groups) \cite{grigorch:degrees}.

Below we will   use  several times the following lemma   (\cite[page 59]{gromov:poly_growth}).

\begin{lem} [Splitting lemma] \label{splitting} Let   $G$ be a finitely generated group of polynomial growth of degree $d$ and $H\lhd G$ be a
normal subgroup with quotient $G/H$ being an infinite cyclic group.  Then $H$ has  polynomial growth of
degree $\leq d-1$.

\end{lem}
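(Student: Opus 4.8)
The plan is to reduce the statement to two separate tasks. First fix a convenient generating set of $G$: choosing $t\in G$ whose image generates $G/H\cong\mathbb{Z}$, and writing any finite generating set of $G$ as $g_i=h_it^{e_i}$ with $e_i$ its image in $\mathbb{Z}$, the finite set $\{t,h_1,\dots,h_m\}$ with $h_i=g_it^{-e_i}\in H$ again generates $G$. Writing $\gamma_G,\gamma_H$ for the growth functions, I must then (i) show that $H$ is finitely generated, so that $\gamma_H$ is meaningful, and (ii) establish the bound $\gamma_H(n)\preceq n^{d-1}$. I expect (ii) to be routine counting and (i) to be the main obstacle.

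For (ii), assume $H$ is generated by a finite set $T$, write $|h|_H$ for word length in $H$ with respect to $T$, and put $c=\max_{s\in T}|s|_G$, so that $|h|_G\le c\,|h|_H$ and hence the ball $B_H(r)$ of radius $r$ in $H$ lies in $B_G(cr)$. The idea is that the $\mathbb{Z}$-direction leaves room for many disjoint translates: for $0\le a\le n$ the sets $t^aB_H(r)$ lie in the pairwise distinct cosets $t^aH$ of $G/H\cong\mathbb{Z}$, hence are pairwise disjoint, and each element has $G$-length at most $a+cr\le n+cr$. Therefore
\[
(n+1)\,\gamma_H(r)\;=\;\sum_{a=0}^{n}\bigl|t^aB_H(r)\bigr|\;\le\;\gamma_G(n+cr)\;\le\;C\,(n+cr)^d .
\]
Taking $n=\lceil cr\rceil$ gives $\gamma_H(r)\le C'\,r^{d-1}$, which is polynomial growth of degree at most $d-1$.

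The heart of the matter is (i). Here I would use the ascending chain
\[
H_k\;=\;\bigl\langle\, t^{-j}h_i\,t^{j}\;:\;1\le i\le m,\ |j|\le k\,\bigr\rangle,\qquad k=0,1,2,\dots,
\]
whose union is $H$, since $H$ is the product of all conjugates $t^{-j}h_it^j$ (a Reidemeister--Schreier type fact). Conjugation by $t^{\pm1}$ carries $H_k$ into $H_{k+1}$, so if ever $H_k=H_{k+1}$ then $H_k$ is normalized by $t$ and thus equals $H$; hence $H$ is finitely generated as soon as the chain stabilizes. It therefore suffices to rule out a strictly increasing chain. Assuming $H_{k-1}\subsetneq H_k$ for all $k\ge1$, choose for each $k$ a new generator $w_k\in H_k\setminus H_{k-1}$; then $|w_k|_G\le 2k+1$, while $w_k\notin H_{k-1}\supseteq\langle w_1,\dots,w_{k-1}\rangle$.

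The key point is that the $2^{K}$ products $w_K^{\delta_K}\cdots w_1^{\delta_1}$ with $\delta_k\in\{0,1\}$ are pairwise distinct: comparing two of them at the largest index $j$ where the exponents differ and cancelling the equal higher factors would force $w_j^{\pm1}\in\langle w_1,\dots,w_{j-1}\rangle\subseteq H_{j-1}$, contradicting the choice of $w_j$. Since each such product has $G$-length at most $\sum_{k=1}^{K}(2k+1)\le 3K^2$, this yields $\gamma_G(3K^2)\ge 2^{K}$, which for large $K$ contradicts $\gamma_G(n)\le Cn^d$; in fact it shows that failure of finite generation would force $\gamma_G(n)\succeq e^{\sqrt{n}}$, far above polynomial growth. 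Hence the chain stabilizes, $H$ is finitely generated, and (ii) applies. I expect the genuinely delicate step to be precisely this verification that a strictly ascending chain yields near-exponentially many elements in balls of radius $O(K^2)$; the rest is bookkeeping. Alternatively one may shortcut (i) by invoking Gromov's theorem, since a group of polynomial growth is virtually nilpotent and therefore Noetherian.
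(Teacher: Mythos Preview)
The paper does not prove the Splitting Lemma; it merely states it and cites \cite[page~59]{gromov:poly_growth}. So there is no in-paper proof to compare against, and your argument has to be judged on its own merits.

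Your proof is correct. Part~(ii) is the standard disjoint-translate count and is fine as written. Part~(i) is essentially a quantitative form of Milnor's lemma (stated later in the paper, just after Theorem~\ref{rosset1}): a subexponential-growth group cannot have a strictly ascending chain of the type $H_0\subsetneq H_1\subsetneq\cdots$ obtained by adjoining successive $t$-conjugates, because the ``staircase'' products $w_K^{\delta_K}\cdots w_1^{\delta_1}$ give $2^K$ distinct elements in a ball of radius $O(K^2)$. Your distinctness argument via the largest differing index is exactly the right one. One small clarification worth making explicit: when you pick $w_k\in H_k\setminus H_{k-1}$ and then assert $|w_k|_G\le 2k+1$, you are tacitly choosing $w_k$ among the \emph{defining generators} $t^{\pm k}h_it^{\mp k}$ of $H_k$ (at least one of which must lie outside $H_{k-1}$ whenever $H_{k-1}\subsetneq H_k$), not an arbitrary element of $H_k\setminus H_{k-1}$. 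With that reading the length bound is immediate.

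Your closing remark that one could instead invoke Gromov's theorem to get finite generation of $H$ is true but circular in spirit here, since the Splitting Lemma is quoted from Gromov's paper precisely as one of the ingredients of that circle of ideas; the elementary chain argument you give is the preferable route.
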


\section{Gap Conjecture and its modifications}\label{conjecture}

 We will say that a group is \emph{virtually nilpotent} (virtually solvable) if it contains
nilpotent (solvable) subgroup of finite index.  It was observed around 1968 by Milnor, Wolf, Hartly and
Guivarc'h  that a nilpotent group has polynomial growth and hence  a virtually nilpotent group also has
polynomial growth. In his remarkable paper \cite{gromov:poly_growth}, Gromov established the converse.

\begin{thm} (Gromov 1981) If a finitely generated group  $G$ has polynomial growth, then $G$ contains
a nilpotent subgroup of finite index.
\end{thm}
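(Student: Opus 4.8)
The plan is to argue by induction on the degree $d$ of polynomial growth, with the Splitting Lemma (Lemma~\ref{splitting}) as the inductive engine. The base case is immediate: a group whose growth function is bounded is finite, hence virtually trivial and so virtually nilpotent. For the inductive step I would first isolate the purely group-theoretic reduction that goes back to Milnor and Wolf: it suffices to produce a subgroup $G_0\le G$ of finite index together with a surjection $\phi\colon G_0\twoheadrightarrow \mathbb{Z}$. Granting such a $\phi$, put $H=\ker\phi$, so that $G_0/H$ is infinite cyclic and Lemma~\ref{splitting} forces $H$ to have polynomial growth of degree $\le d-1$. After checking that $H$ is finitely generated (which requires a short argument but does hold in the polynomial-growth setting), the induction hypothesis makes $H$ virtually nilpotent, so $G_0$ is virtually-nilpotent-by-cyclic and of polynomial growth; a final appeal to the Milnor--Wolf theorem on solvable groups \cite{milnor:solv68,wolf:growth68} upgrades ``virtually solvable of polynomial growth'' to ``virtually nilpotent''. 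Since $G_0$ has finite index in $G$, this yields the theorem.

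The whole difficulty is thereby concentrated in producing the virtual surjection onto $\mathbb{Z}$, and here I would follow Gromov's geometric construction \cite{gromov:poly_growth}. Rescaling the word metric $d_A$, consider the pointed metric spaces $X_n=(G,\frac{1}{n}d_A,e)$. The polynomial growth bound translates into a uniform doubling property, which guarantees that the sequence $(X_n)$ is precompact in the pointed Gromov--Hausdorff topology; I would extract a convergent subsequence with limit $(Y,y_0)$. One then verifies that $Y$ is a complete, locally compact, connected, homogeneous geodesic space of finite topological dimension, and that it is unbounded and nondegenerate. Its isometry group $\mathrm{Isom}(Y)$ is a locally compact group acting transitively on $Y$, so by the solution of Hilbert's fifth problem (Montgomery--Zippin, Gleason) it is a Lie group $L$ with finitely many connected components, and the $G$-action furnishes a homomorphism $\rho\colon G\to L$.

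To finish I would analyze the image $\rho(G)\le L$. Because $Y$ is unbounded and arises as the limit of the rescaled Cayley graphs, the action is essential and $\rho(G)$ is infinite. A finitely generated subgroup of a Lie group obeys the Tits alternative \cite{tits:alternative72}: it either contains a nonabelian free subgroup---excluded here, since that would force exponential growth, whereas a quotient of $G$ cannot grow faster than $G$---or it is virtually solvable. Hence $\rho(G)$ is virtually solvable and, being a quotient of $G$, of polynomial growth, so Milnor--Wolf makes it an infinite, finitely generated, virtually nilpotent group. Such a group has a finite-index subgroup with infinite abelianization, hence a surjection onto $\mathbb{Z}$; pulling this surjection back along $\rho$ produces the required finite-index $G_0\le G$ with $G_0\twoheadrightarrow\mathbb{Z}$, closing the induction.

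The main obstacle is plainly the geometric core of the second paragraph: establishing the precompactness of the rescaled sequence and, above all, the finite-dimensionality of the limit $Y$ so that Montgomery--Zippin applies, together with the verification that $\rho(G)$ is genuinely infinite. Everything else is bookkeeping around the Splitting Lemma, the Tits alternative, and the Milnor--Wolf theorem. A secondary but real technical point, which I would not want to gloss over, is the finite generation of the kernel $H$ needed to feed the induction hypothesis.
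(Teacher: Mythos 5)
First, a caveat on the comparison itself: the paper does not prove this theorem at all --- it is quoted as Gromov's result with a citation to \cite{gromov:poly_growth} --- so the only benchmark is Gromov's published argument, whose overall architecture (induction on the degree of growth fed by the Splitting Lemma~\ref{splitting}, reduction to producing a finite-index subgroup surjecting onto $\mathbb{Z}$, limits of rescaled Cayley graphs, Montgomery--Zippin, Tits alternative, Milnor--Wolf) is exactly what you reproduce. Your first paragraph is correct and standard: the finite generation of $H=\ker\phi$ follows, for instance, from Theorem~\ref{rosset} applied to the solvable quotient $\mathbb{Z}$, and the upgrade from virtually-nilpotent-by-cyclic to virtually nilpotent is Milnor--Wolf \cite{milnor:solv68,wolf:growth68}.

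The genuine gap is in your last paragraph, at the sentence ``the action is essential and $\rho(G)$ is infinite.'' This is not a verification you may postpone; it is false as stated, and the case it sweeps away is the heart of Gromov's proof. Each fixed $g\in G$, viewed as a left translation of $X_n=(G,\tfrac{1}{n}d_A,e)$, moves the basepoint by $|g|_A/n\to 0$, so its limit is an isometry of $Y$ fixing $y_0$: the homomorphism $\rho$ lands in the compact stabilizer of the basepoint, and $\rho(g)$ is nontrivial only insofar as conjugation distorts word length linearly, i.e.\ only if $d(gx,x)=|x^{-1}gx|$ grows like $|x|$. In particular, if $G$ is abelian then $\rho$ is trivial, even though $Y$ is unbounded --- unboundedness measures the size of the space, not the size of the image of $\rho$ --- and at this stage of the induction you are not entitled to assume any structure on $G$. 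So the dichotomy ``$\rho(G)$ infinite versus finite'' must be confronted: the infinite case goes exactly as you say (Tits alternative \cite{tits:alternative72}, then Milnor--Wolf, then pull back the virtual surjection onto $\mathbb{Z}$), but in the finite case one passes to the finite-index kernel, on which all conjugation displacements are sublinear, and the limit action gives nothing; Gromov needs a separate and substantially more delicate analysis there, and the difficulty of precisely this step is what the later proofs (Kleiner's harmonic-function argument, and the finitary approach of Shalom and Tao \cite{shalom_tao:polynom10}) were designed to circumvent. Your sketch is the right skeleton of Gromov's argument, but the step you present as routine is the one place where the proof actually lives.
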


In fact Gromov obtained  stronger result about  polynomial growth.
\begin{thm}\label{gromoveffect} For any positive integers $d$ and $k$, there exist positive integers $R, N$ and $q$ with the
following property. If a group $G$ with a fixed system of generators satisfies the inequality $\gamma(n)\leq
kn^d$ for $n=1,2,\dots,R$ then $G$ contains a nilpotent subgroup $H$ of index at most $q$ and whose degree of
nilpotence is at most $N$.
\end{thm}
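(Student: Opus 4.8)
The plan is to deduce this effective statement from Gromov's (non-effective) theorem stated above by a compactness argument in the space of marked groups. First I would argue by contradiction: suppose the conclusion fails for some fixed pair $(d,k)$. Since the hypothesis gives $\gamma(1)\le k$ and, after discarding redundant generators (trivial ones and duplicates, which change neither the group nor the growth bound), one has $\gamma(1)\ge m+1$ for an $m$-element generating set, the number of generators satisfies $m\le k-1$. Passing to a subsequence we may therefore assume that all the groups we produce are marked by a fixed number $m$ of generators, so they are points of the compact space $\mathcal G_m$ of marked groups on $m$ generators (equivalently, normal subgroups of the free group $F_m$ with the Chabauty topology), in which convergence means eventual coincidence of balls of every fixed radius in the labelled Cayley graphs. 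For each $j\ge 1$, applying the assumed failure with $R=N=q=j$ yields a marked group $(G_j,S_j)\in\mathcal G_m$ such that $\gamma_{G_j}(n)\le kn^d$ for all $n\le j$, while $G_j$ contains no nilpotent subgroup of index $\le j$ and nilpotency class $\le j$. By compactness, after passing to a further subsequence, $(G_j,S_j)\to(G_\infty,S_\infty)$ in $\mathcal G_m$.

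Next I would identify the limit. For each fixed $n$ the value $\gamma_G(n)$ is determined by the ball of radius $n$ in the Cayley graph, hence $(G,S)\mapsto\gamma_G(n)$ is locally constant on $\mathcal G_m$; consequently $\gamma_{G_\infty}(n)=\gamma_{G_j}(n)\le kn^d$ for all $j$ large enough (using $n\le j$), and this holds for every $n$. Thus $G_\infty$ has polynomial growth of degree $\le d$, so by Gromov's theorem (the first theorem above) $G_\infty$ is virtually nilpotent: it contains a nilpotent subgroup $H$ of some finite index $q_0$ and some nilpotency class $N_0$. The crucial additional fact I would invoke is that a finitely generated virtually nilpotent group is finitely presented (being polycyclic-by-finite), so $G_\infty=F_m/\langle\!\langle r_1,\dots,r_s\rangle\!\rangle$ for finitely many relators $r_i$.

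Finally I would transfer this structure back to the approximating groups. For a fixed word $w\in F_m$ the condition ``$w=e$ in $G$'' depends only on a ball of radius $|w|$, hence is a clopen condition on $\mathcal G_m$; since each relator $r_i$ is trivial in $G_\infty$, it is trivial in $G_j$ for all sufficiently large $j$. For such $j$ the relations of $G_\infty$ hold in $G_j$, so there is a surjection $\pi\colon G_\infty\twoheadrightarrow G_j$. The image $\pi(H)$ is then a subgroup of $G_j$ of index at most $[G_\infty:H]=q_0$ and of nilpotency class at most that of $H$, namely $\le N_0$, because indices and nilpotency classes do not increase under surjective homomorphisms. Choosing $j>\max(q_0,N_0)$ produces in $G_j$ a nilpotent subgroup of index $\le j$ and class $\le j$, contradicting the defining property of $G_j$. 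This contradiction proves the theorem, with $R,N,q$ obtained (non-effectively) from the compactness argument.

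I expect the main obstacle to be precisely this last transfer step: polynomial growth and virtual nilpotence are not a priori ``finite-radius'' properties, and passing from the limit group back to the nearby groups $G_j$ would fail without a mechanism to encode virtual nilpotence by finitely many clopen conditions. Finite presentability of $G_\infty$ is what supplies this mechanism, converting the global statement ``$G_\infty$ is virtually nilpotent with index $\le q_0$ and class $\le N_0$'' into finitely many triviality relations that survive in all sufficiently close marked groups. I would also take minor care that the argument tolerates a finite limit group $G_\infty$ (where it degenerates but still yields a contradiction) and that the reduction to a fixed generator number $m$ is harmless.
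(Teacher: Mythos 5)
The paper offers no proof of Theorem~\ref{gromoveffect}: it is imported as Gromov's result from \cite{gromov:poly_growth}, stated right after the qualitative polynomial growth theorem, so there is no in-paper argument to compare against. Your proposal is correct, and it is the standard ``soft'' derivation of the effective version from the qualitative one (essentially the compactness/ultraproduct argument going back to van den Dries and Wilkie). The steps all hold up: the bound $\gamma(1)\le k$ caps the number of distinct nontrivial generators, so after passing to a subsequence you may work in a single compact space $\mathcal{G}_m$ of marked groups; for fixed $n$ the value $\gamma_G(n)$ depends only on $N\cap B_{F_m}(2n)$, hence is locally constant, so the diagonal sequence of counterexamples has a limit with genuine polynomial growth, to which Gromov's qualitative theorem applies; and the transfer back to the approximating groups rests exactly on the point you isolate, namely that a finitely generated virtually nilpotent group is polycyclic-by-finite and hence finitely presented, which forces $N_\infty\subseteq N_j$, i.e.\ a marked surjection $G_\infty\twoheadrightarrow G_j$, for all large $j$; since surjective images of finite-index nilpotent subgroups have index and nilpotency class no larger than the originals, this contradicts the defining property of $G_j$. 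As for what each route buys: Gromov's own approach extracts the effective statement from the structure of his proof, whereas your argument exhibits it as a formal consequence of the qualitative statement --- shorter and more flexible, but intrinsically non-constructive, since the $R$, $N$, $q$ produced by compactness admit no explicit estimate, in contrast with the quantitative refinement of Shalom and Tao mentioned later in the paper.
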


 The above theorem  implies  existence of a function $\upsilon$ growing faster than any
polynomial and such that if $\gamma_G\prec \upsilon$, then  growth of  $G$ is polynomial.

Indeed, taking a sequence $\{k_i,d_i\}_{i=1}^{\infty}$ with $k_i \to \infty $ and $d_i \to \infty$ when $i
\to \infty$ and the corresponding sequence $\{R_i\}_{i=1}^{\infty}$, whose existence follows from
Theorem~\ref{gromoveffect}, one can build a function $ \upsilon(n)$ which coincides with the polynomial
$k_in^{d_i}$ on the interval $[R_{i-1}+1,R_i]$  and separates polynomial growth from intermediate.
 Therefore
there is a \emph{Gap} in the scale of rates of growth of finitely generated groups and a big problem is to
find the optimal function (or at least to provide good lower and upper bounds for it) which separates
polynomial growth  from intermediate. The best known result in this direction is the function $ n^{(\log \log
n)^c}$ ($c$ some positive constant) which appeared recently in the paper of Shalom and Tao \cite[Corollary
8.6] {shalom_tao:polynom10}.

\smallskip

The lower bound of the type $e^{\sqrt n}$  for all groups $\mathcal G_{\omega}^{(p)}$ of intermediate growth
established in \cite{grigorch:milnor,grigorch:degrees,grigorch:degrees85,grigorch:habil}
 allowed the author to guess that
equivalence class  of function $e^{\sqrt n}$ could be a good candidate for a ``border'' between polynomial
and exponential growth.  This guess  was further strengthened  in 1988 when the author obtained the result
published in~\cite{grigorch:hilbert} (see Theorem~\ref{gap1}). For the first time the Gap Conjecture was
formulated in the form of  a  question  in 1991 (see \cite{grigorch:ICM90}).

\begin{conj}  \label{c:gap1}(Gap Conjecture) If the growth function $\gamma_G(n)$ of a finitely generated group $G$ is strictly bounded from
above by   $e^{\sqrt{n}}$ (i.e. if $\gamma_G(n) \prec e^{\sqrt n}$), then  growth of $G$ is polynomial.
\end{conj}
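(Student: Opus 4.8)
The plan is to prove the contrapositive: every finitely generated group $G$ that is not virtually nilpotent satisfies $\gamma_G(n)\succeq e^{\sqrt n}$. By Gromov's theorem this is equivalent to the Conjecture, since ``virtually nilpotent'' and ``polynomial growth'' coincide, so the real target is to exclude intermediate growth strictly below $e^{\sqrt n}$. The effective form in Theorem~\ref{gromoveffect} already produces some superpolynomial threshold $\upsilon$ for which $\gamma_G\prec\upsilon$ forces polynomial growth, and the Shalom--Tao bound $n^{(\log\log n)^c}$ \cite{shalom_tao:polynom10} sharpens this; structurally a gap exists, and the whole content is to show the threshold may be pushed up to $e^{\sqrt n}$. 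I would attack this by isolating the classes on which the $e^{\sqrt n}$ lower bound is accessible and then reducing the general case to those classes.

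First I would settle the residually nilpotent case, where the machinery is strongest. If $G$ is residually-$p$ (or residually nilpotent) and not virtually nilpotent, I would pass to the associated graded Lie algebra $L=\bigoplus_{n\ge 1}L_n$ over $\mathbb{F}_p$ attached to the Zassenhaus (lower $p$-central) filtration $\{D_n\}$ of $G$. Virtual nilpotence fails exactly when $L$ is infinite-dimensional, so $\sum_{i\le n}\dim L_i\to\infty$, and a Jennings-type count relating word length to the growth of the filtration quotients $D_n/D_{n+1}$ then forces $\gamma_G(n)\succeq e^{\sqrt n}$, the $\sqrt n$ arising from balancing the number of filtration layers against the length cost of producing them. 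This is the content of the $1988$ lower bound of \cite{grigorch:hilbert}, extended to residually nilpotent groups as in \cite{lubotzky_mann:polyn91}; the residually solvable case can be fed into the same framework using Wilson \cite{wilson:growthsolv05,wilson:gap11}, and the Splitting Lemma (Lemma~\ref{splitting}) would be used repeatedly to descend along normal series with infinite cyclic quotients while controlling the degree.

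Next I would reduce the arbitrary case to this favourable one by a minimal-counterexample argument. The aim is to show that any hypothetical group of intermediate growth below $e^{\sqrt n}$ yields such a group that is either residually finite or simple, and, on the just-infinite side, to invoke Wilson's trichotomy: a just-infinite group is a branch group, hereditarily just-infinite, or near-simple (virtually a direct product of finitely many copies of a simple group). Branch and hereditarily just-infinite groups are residually finite, which is promising whenever a nilpotent filtration of the kind used in the previous step can be extracted; the near-simple type reduces to finitely generated infinite simple groups. Passage to just-infinite quotients together with the Splitting Lemma would handle the residually polycyclic scaffolding, so that the entire difficulty collapses onto the residually finite and the simple cases.

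The hard part will be exactly these last cases, and this is why the Conjecture is still open. Residual finiteness alone does not supply a lower $p$-central series with useful sections, so bridging from ``residually finite'' to ``residually nilpotent'' is already unresolved; and a finitely generated infinite simple group has no proper quotients and no residual finiteness whatsoever, hence no filtration with nilpotent or abelian layers on which to run the associated graded counting. None of the tools delivering the $e^{\sqrt n}$ bound in the residually nilpotent setting have a known analogue here. To finish one would need, by an essentially new mechanism, to show that a finitely generated infinite simple group of subexponential growth already grows at least like $e^{\sqrt n}$, or that no such simple group with growth below $e^{\sqrt n}$ exists at all. I expect the simple-group case to be the decisive obstruction: the reductions developed in this paper are precisely designed to concentrate the whole weight of the Gap Conjecture there.
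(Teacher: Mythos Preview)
The statement you are attempting is Conjecture~\ref{c:gap1}, the Gap Conjecture itself; the paper does not prove it and does not claim to. What the paper provides are partial results (Theorems~\ref{gap1}, \ref{lubmann}, \ref{polyciclic}, \ref{wilson6}, \ref{orderable}) and reductions (Theorems~\ref{resid}, \ref{just-inf1}) showing that the full Conjecture would follow from its validity on the classes of residually finite plus simple groups, or on residually polycyclic plus just-infinite groups. There is therefore no ``paper's own proof'' to compare against.

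Your proposal is not a proof either, and to your credit you say so: you correctly trace the residually-$p$ and residually nilpotent cases to \cite{grigorch:hilbert} and \cite{lubotzky_mann:polyn91}, you invoke the just-infinite trichotomy and the descent via the Splitting Lemma exactly as the paper does, and you then isolate the two genuinely open obstructions --- the passage from residually finite to residually nilpotent, and the case of finitely generated infinite simple groups. That diagnosis matches the paper's own assessment (see the discussion around Theorems~\ref{resid} and~\ref{just-inf1} and Problems~1--2). But a strategy that ends with ``to finish one would need, by an essentially new mechanism, \dots'' is an outline of the difficulty, not a resolution of it. As a survey of the landscape your write-up is accurate and well aligned with the paper; as a proof of Conjecture~\ref{c:gap1} it has the same gap the entire subject has, namely that no one currently knows how to handle the simple (or even the general residually finite) case.
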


The question  of independent interest is whether there is a group, or more generally a cancellative
semigroup, with growth equivalent to $e^{\sqrt{n}}$  (for the role of cancellative semigroups in growth
business see \cite{grigorch:semigroups_with_cancel88}).

In \cite{grigorch:milnor11} the author formulated a number of conjectures relevant to the main Conjecture
discussed there and in this note. Let us recall some of them as they will play some role in what follow.

\begin{conj}  \label{c:gap2}(Gap Conjecture with parameter $\beta$, $0<\beta<1$). If the growth function
$\gamma_G(n)$ of a finitely generated group $G$ is strictly bounded from above by $e^{n^\beta}$ (i.e. if
$\gamma(n) \prec e^{n^\beta}$) then the growth of $G$ is polynomial.
\end{conj}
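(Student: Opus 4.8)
Since Conjecture~\ref{c:gap2} is an open problem, what I can offer is a \emph{reduction program} rather than a complete proof; for $\beta=\tfrac12$ it specializes to the Gap Conjecture~\ref{c:gap1}, verified so far only in special classes (notably residually nilpotent groups, where the bound $e^{\sqrt n}$ from~(\ref{gapineq}) is known to be universal). The effective form of Gromov's theorem (Theorem~\ref{gromoveffect}) already yields a gap function $\upsilon$ separating polynomial from intermediate growth; the content of the conjecture is that this border can be taken to be $e^{n^\beta}$. By Gromov's theorem it suffices to prove that a finitely generated $G$ with $\gamma_G(n)\prec e^{n^\beta}$ is virtually nilpotent, so I would argue by contradiction: assume $G$ has intermediate growth (it is subexponential, being $\prec e^{n^\beta}\prec e^n$, and non-polynomial) while still satisfying $\gamma_G(n)\prec e^{n^\beta}$, and try to manufacture from $G$ a counterexample lying in a more tractable class.

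The engine of the reduction is the monotonicity of growth recalled in Section~\ref{prelimin}: the bound $\gamma\prec e^{n^\beta}$ is inherited by quotients (since $\gamma_{G/H}\preceq\gamma_G$) and by finitely generated subgroups, while finite-index subgroups leave the growth type unchanged. The plan is therefore a structural descent. First I would pass to a just-infinite quotient $Q$: by Zorn's lemma $G$ has a normal subgroup $N$ maximal with respect to $G/N$ being infinite, and $Q=G/N$ is just-infinite with $\gamma_Q\preceq\gamma_G\prec e^{n^\beta}$. Invoking Wilson's trichotomy, $Q$ is either a branch group, a hereditarily just-infinite (hence residually finite) group, or a near-simple group, in which last case a finite-index subgroup is a direct product of copies of a simple group whose growth controls that of $Q$. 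This is precisely the split into the classes appearing in the announced reductions (Theorems~\ref{resid} and~\ref{just-inf}): validity of the conjecture for residually finite and simple groups, or for just-infinite and residually polycyclic groups, would close each branch. The residually solvable layers are dispatched by Milnor--Wolf, since a finitely generated solvable group of subexponential growth is already virtually nilpotent, so no intermediate-growth counterexample can be solvable.

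The main obstacle is that growth can \emph{drop} under passage to a quotient: although $Q$ inherits $\gamma_Q\prec e^{n^\beta}$, there is no guarantee that $Q$ retains intermediate growth---it may collapse to polynomial growth, in which case no contradiction survives at the level of $Q$ and one is instead forced to control the kernel $N$, which need not even be finitely generated. Equivalently, the genuinely missing ingredient is a \emph{lower} bound: one would need to show that an infinite group in the relevant special class (a simple group, or a branch group) of honestly intermediate growth must satisfy $\gamma\succeq e^{n^\beta}$. Such a universal lower bound is presently known only for residually nilpotent groups via~(\ref{gapineq}), and for infinite simple groups of intermediate growth---whose very existence is open---there is no handle at all. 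It is exactly this mismatch between the available monotonicity, which only propagates upper bounds downward, and the needed lower bounds on subquotients that keeps the statement conjectural, and why the most one can honestly secure is the reduction to the special classes above.
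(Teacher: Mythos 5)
You were asked to prove a statement that is, in fact, a conjecture; the paper does not prove it either, so your honest assessment --- that only a reduction program can be offered --- is the right call, and the program you sketch is essentially the paper's own: pass to a just-infinite quotient (Proposition~\ref{quotient}), split by the trichotomy of Theorem~\ref{just-inf}, dispatch solvable layers by Milnor--Wolf, and reduce to the base classes treated in Theorems~\ref{resid} and~\ref{just-inf1}.

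One substantive correction, because it concerns the hinge of the paper's actual descent argument. You name as the decisive obstacle that when the just-infinite quotient collapses to polynomial growth ``one is instead forced to control the kernel $N$, which need not even be finitely generated.'' In the regime of the conjecture this obstacle does not arise: if $\gamma_G \prec e^{n^\beta}$ (so $G$ has subexponential growth) and $G/N$ has polynomial growth, then $G/N$ is virtually nilpotent by Gromov's theorem, hence virtually solvable, and Rosset's theorem (Theorem~\ref{rosset}) forces $N$ to be finitely generated; the paper makes exactly this observation right after Proposition~\ref{quotient}. Combined with Theorem~\ref{element} (the only elementary amenable hereditary just-infinite groups are $\mathbb{Z}$ and $D_{\infty}$), the collapse is so controlled that iterating produces a descending chain of finitely generated characteristic subgroups with quotients $\mathbb{Z}^{d_i}$ or $\mathbb{Z}_2^{t_i}$, and an infinite such chain is excluded by the Splitting Lemma~\ref{splitting} applied to $G/G_{\omega}$, provided the conjecture is granted for residually polycyclic (or residually finite) groups. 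So the machinery for propagating bounds through kernels already exists; the genuinely open content is confined to the base classes --- simple, hereditary just-infinite, and residually polycyclic/solvable groups --- exactly as your closing sentences say.
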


Thus the Gap Conjecture with parameter $1/2$ is just the Gap Conjecture~\ref{c:gap1}. If  $\beta<1/2$ then
the Gap Conjecture with parameter $\beta$ is weaker than the Gap Conjecture,  and if  $\beta>1/2$ then it is
stronger than the  Gap Conjecture.

\begin{conj} \label{c:gap3} (Weak Gap Conjecture).  There is a $\beta, 0 < \beta <1$ such that if  $\gamma_G(n) \prec
e^{n^\beta}$ then the Gap Conjecture with parameter $\beta$ holds.

\end{conj}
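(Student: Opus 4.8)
The plan is to prove the Weak Gap Conjecture by fixing a single exponent $\beta\in(0,1)$ and reducing the Gap Conjecture with that parameter to a short list of structurally special classes, using the two reduction theorems of this note. Concretely, I would invoke Theorem~\ref{just-inf}, whose length-function estimates are insensitive to the precise exponent and hence apply to the parameter-$\beta$ version as well, to reduce the problem to verifying the gap for residually polycyclic groups and for just-infinite groups; alternatively Theorem~\ref{resid} reduces it to residually finite and simple groups. In either case it suffices to produce one value of $\beta$ that works uniformly across the target classes.

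One of the two halves of the just-infinite reduction comes essentially for free. Every residually polycyclic group is residually solvable, so Wilson's results in \cite{wilson:growthsolv05,wilson:gap11} already supply the required gap for this class; and for residually nilpotent groups the optimal lower bound $\gamma_G(n)\succeq e^{\sqrt n}$ of \cite{grigorch:hilbert}, extended in \cite{lubotzky_mann:polyn91}, shows directly that a non-virtually-nilpotent group of this type grows at least like $e^{\sqrt n}$, giving the gap with $\beta=1/2$. Thus the residually polycyclic side of Theorem~\ref{just-inf} is settled, and the whole problem collapses onto the class of just-infinite groups.

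The decisive step is therefore the just-infinite case, and here I would use the standard trichotomy: a just-infinite group is either branch, or hereditarily just-infinite, or virtually a direct product of finitely many copies of a simple group. The hereditarily-just-infinite and near-simple alternatives route back into the residually finite and simple reductions, so the genuine core is the branch case --- groups acting on rooted trees. For a branch group that is not virtually nilpotent one must exhibit a lower bound $\gamma_G(n)\succeq e^{n^\beta}$, and this is exactly the regime of the author's examples $\mathcal G_\omega^{(p)}$, whose growth is controlled by contraction estimates for the word metric under the self-similar action.

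I expect the main obstacle to live entirely in this branch case. No uniform contraction or renormalization argument is presently known that forces a lower bound of the shape $e^{n^\beta}$ for all branch (or all just-infinite) groups simultaneously: the contraction coefficients governing the metric can degenerate as the defining parameter $\omega$ varies, and the oscillating growth exhibited by the family $\mathcal G_\omega$ blocks any single exponent from surviving a direct estimate. So while the reduction steps and the soluble-type classes are available, the crux --- a robust, parameter-independent lower bound for self-similar and branch groups --- is precisely where a genuinely new idea is required, which is why the conjecture remains open.
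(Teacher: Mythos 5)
Your proposal is not a proof, and it cannot be repaired into one within the scope of this paper: the statement you were asked to prove is Conjecture~\ref{c:gap3}, which is an \emph{open conjecture} here. The paper itself offers no proof of it; what it offers are conditional reductions (Theorem~\ref{resid}: if the conjecture holds for residually finite groups and simple groups, it holds for all groups; Theorem~\ref{just-inf1}: if it holds with parameter $1/6$ for just-infinite groups, or with parameter $1/2$ for residually polycyclic and just-infinite groups, it holds for all groups), together with unconditional results for special classes (residually nilpotent groups via \cite{grigorch:hilbert,lubotzky_mann:polyn91}, residually solvable groups with parameter $1/6$ via Wilson). Your outline correctly reproduces this reduction machinery --- though you cite Theorem~\ref{just-inf} (the trichotomy of just-infinite groups into branch, hereditary just-infinite, and near-simple) where you mean the reduction Theorem~\ref{just-inf1} --- but a reduction is not a proof, and you concede in your final paragraph that the reduced case is precisely where ``a genuinely new idea is required.'' A proof attempt that ends by declaring the decisive step open has a gap coextensive with the entire problem.

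Two further points in your argument are misleading. First, your claim that the hereditary just-infinite and near-simple cases ``route back into the residually finite and simple reductions'' is circular: Theorem~\ref{resid} \emph{assumes} the conjecture for residually finite and simple groups, and neither class is settled (hereditary just-infinite groups are residually finite, but the Gap Conjecture for residually finite groups is itself open --- only the residually finite-$p$ and residually nilpotent cases are known). Second, residually polycyclic groups are not residually nilpotent, so the $e^{\sqrt n}$ lower bound of Theorem~\ref{lubmann} does not apply to them; what is available unconditionally is Wilson's parameter-$1/6$ result (Theorem~\ref{wilson6}), which does suffice for the \emph{weak} form on that class, but the distinction matters if one tracks which exponent $\beta$ is being claimed where. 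The honest summary of both the paper and your proposal is the same: the Weak Gap Conjecture reduces, by Theorem~\ref{just-inf1}(i), to establishing a lower bound $\gamma_G(n)\succeq e^{n^{1/6}}$ for just-infinite groups that are not virtually nilpotent --- in particular for branch groups and infinite finitely generated simple groups --- and no argument for that is known.
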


The gap type conjectures can be formulated for other asymptotic characteristics of groups like return
probabilities $P^{(n)}_{e,e}$ ($e$ denotes the identity element) for a non degenerate random walk on a group,
F\"{o}lner function $\mathcal F(n)$, or spectral density  $\mathcal N (\lambda)$.  There is a close relation
between them and the Gap Conjecture on growth, which was mentioned in~\cite{grigorch:milnor11}.   When
writing this note the author realized that to understand better the relation between different forms of the
gap type conjectures it is useful to consider in parallel to the conjecture \ref{c:gap2}  (which we will
denote $G(\beta)$) a  stronger version of it, which we will denote $G^*(\beta)$:

\begin{conj} [ Conjecture  $G^*(\beta)$] If a group $G$ is not virtually nilpotent then $\gamma_G(n)\succeq e^{n^\beta}$.

\end{conj}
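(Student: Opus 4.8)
The plan is to attack $G^*(\beta)$ by reduction to distinguished subclasses of groups, exploiting the fact that the statement is a \emph{lower} bound. First I would invoke Gromov's theorem to replace the hypothesis ``$G$ is not virtually nilpotent'' by ``$G$ has superpolynomial growth.'' The key structural observation --- and the reason the lower-bound form $G^*(\beta)$ is more convenient for this argument than $G(\beta)$ --- is that lower bounds on growth are monotone under passage to quotients and invariant under passage to finite-index subgroups. Indeed, if $N\lhd G$ then $|gN|\le|g|$ forces $\gamma_{G/N}\preceq\gamma_G$, so any inequality $\gamma_{G/N}(n)\succeq e^{n^\beta}$ lifts by transitivity to $\gamma_G(n)\succeq e^{n^\beta}$; likewise growth is preserved up to $\sim$ by finite index. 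Hence, to certify the conclusion for $G$ it suffices to \emph{exhibit a single quotient} of $G$, lying in a class where the conjecture is already known, which is itself not virtually nilpotent.

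Next I would produce such a quotient through the theory of just-infinite groups. Every infinite finitely generated group has a just-infinite quotient: a Zorn's-lemma argument yields a normal subgroup $M$ maximal among those of infinite index, and $G/M$ is then just-infinite. By Wilson's trichotomy a just-infinite group is either a branch group, a hereditarily just-infinite group, or of ``simple type'' (it has a finite-index subgroup isomorphic to a direct power $L\times\cdots\times L$ of an infinite simple group $L$). The first two kinds are residually finite, while the simple type is governed by $L$ via finite-index invariance and the direct-product estimate; so granting the conjecture for residually finite groups and for simple groups, one obtains it for all just-infinite groups. Combined with the first paragraph, \emph{if $G$ admits a just-infinite quotient that is not virtually nilpotent, the proof is complete.}

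The hard part is exactly the case when no such quotient exists. A just-infinite quotient may itself be virtually nilpotent, hence of polynomial growth --- already $\mathbb{Z}$ is a just-infinite quotient of every group surjecting onto it --- so the construction can land inside the virtually nilpotent world and fail to detect the superpolynomial growth of $G$. Bridging this gap is where the residually polycyclic (more generally residually solvable) class must enter: one has to show that a group all of whose just-infinite quotients are virtually nilpotent is, in a suitable sense, residually polycyclic, and then invoke the conjecture there, for which the needed lower bound is already available through the results of Wilson quoted above. The genuine difficulty is this last passage: a normal subgroup maximal subject to avoiding a prescribed element yields only a \emph{monolithic} quotient rather than a just-infinite one, and extracting growth information through that weaker structure --- together with establishing the conjecture inside the base classes themselves --- is where I expect the real work, and the remaining open difficulty, to lie.
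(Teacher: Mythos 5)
You were asked to prove Conjecture $G^*(\beta)$, but this statement is precisely that --- a conjecture. The paper contains no proof of it; it only establishes the conjecture for special classes of groups (residually nilpotent groups for $\beta=1/2$, residually solvable and right orderable groups for $\beta=1/6$) and proves reductions of the general case to such classes. So there is no proof in the paper to compare your attempt against, and your proposal, correctly, does not claim to close the problem: your final sentence concedes that the base cases remain open, which is an accurate description of the state of the art.

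That said, the reduction you sketch is essentially the paper's own. Your first two paragraphs (lower bounds on growth pass from quotients to the group and are finite-index invariants; every infinite finitely generated group has a just-infinite quotient by Zorn's lemma, Proposition~\ref{quotient}; Wilson's trichotomy of just-infinite groups into branch, hereditary just-infinite and near-simple, Theorem~\ref{just-inf}; branch and hereditary just-infinite groups are residually finite) reproduce the skeleton of Theorem~\ref{resid}, which reduces $G^*(1/2)$ to the classes of residually finite groups and simple groups. Your third paragraph correctly isolates the obstruction the paper must also deal with: the just-infinite quotient may itself be virtually nilpotent (already $\mathbb Z$ is one), so exhibiting a single quotient does not suffice. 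The paper's way around this, which you gesture at but do not supply, is the iterated chain construction: when the just-infinite quotient is $\mathbb Z$ or $D_{\infty}$ (the only possibilities by Theorem~\ref{element}), Rosset's Theorem~\ref{rosset} makes the kernel finitely generated, one builds a descending chain of characteristic subgroups with consecutive quotients $\mathbb Z^{d_i}$ or $\mathbb Z_2^{t_i}$, and one plays the residually polycyclic quotient $G/G_{\omega}$ against the Splitting Lemma~\ref{splitting} to obtain a contradiction; this is exactly how Theorems~\ref{resid} and~\ref{just-inf1} terminate, and it is sharper than your vaguer hope of showing such a group is ``in a suitable sense residually polycyclic.'' So, judged as what it actually is --- a reduction of $G^*(\beta)$ to the classes where the paper knows or conjectures it --- your proposal is sound in outline and matches the paper; judged as a proof of $G^*(\beta)$, it has the same gap the paper has, namely the conjecture itself.
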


It is obvious that $G^*(\beta)$ implies  $G(\beta)$ but the opposite is not clear.  This is related to the
fact that there are groups with incomparable growths~\cite{grigorch:degrees} as   the set $\mathcal W$ of
rates of growth of finitely generated groups is not linear ordered.   The motivation for introducing a
$*$-version  of the Gap Conjecture will be more clear when a second note  \cite{grigorch:gap2} of the author
 is submitted to the arXiv.

\section{Growth and elementary amenable groups} \label{amen}

Amenable groups were introduced by von Neumann  in 1929~\cite{vonNeumann:1929}. Now they play extremely
important role in many branches of mathematics. Let $AG$ denote the class of amenable groups. By a theorem of
Adelson-Velskii \cite{adelson:banach57}, each finitely generated group of subexponential growth belongs to
the class $AG$. This class contains finite groups and commutative groups and
 is closed under the following operations:
\begin{enumerate}
\item taking   a \emph{subgroup},
\item taking  a  \emph{quotient group},
\item  \emph{extensions},
\item  \emph{ unions} (i.e. if for some net $\{\alpha\}, G_\alpha \in AG \   \text{and} \ G_\alpha\subset
    G_{\beta}\ \text{if}\ \alpha<\beta$ then $\cup_{\alpha}G_\alpha\in AG$).
\end{enumerate}

\medskip

Let $EG$ be the class of \emph{elementary} amenable groups i.e., the smallest class of groups containing
finite groups, commutative groups which is closed with respect to the  operations (1)-(4). For instance,
virtually nilpotent and, more generally, virtually solvable groups belong to the class $EG$. This concept
defined by M.~Day in \cite{day:amenable} got  further development in the article \cite{chou:eg} of Chou who
suggested the following approach to study of elementary amenable groups.

 For each ordinal
$\alpha$ define a subclass $EG_{\alpha}$ of $EG$ in the following way. $EG_0$ consists of finite groups and
commutative groups. If
 $\alpha$ is a limit ordinal then
\[ EG_{\alpha}=\bigcup_{\beta \preceq \alpha} EG_{\beta}.\]
Further, $EG_{\alpha+1}$  is defined as as the class of groups which are extensions of groups from  set
$EG_{\alpha}$ by groups from  the same set. It is known (and easy to check) that each of the classes
$EG_{\alpha}$   is closed with respect to the operations (1) and (2) \cite{chou:eg}. By the \emph{elementary
complexity} of a group $G\in EG$ we call the smallest $\alpha$ such that $G \in EG_{\alpha}$.

It was shown in \cite{chou:eg}  that  class $EG$ does not contain groups of intermediate growth, groups of
Burnside type (i.e. finitely generated infinite torsion groups), and finitely generated infinite simple
groups. A further study of elementary groups and its generalizations was done by D.~Osin
\cite{osin:elementary04}.

A larger class $SG$ of subexponentially amenable groups was (implicitly) introduced in
\cite{freedman_teich:subexp95}, and explicitly in \cite{grigorch:example}, and studied in
\cite{harpe_cg:paradoxical} and other papers.

A useful fact about groups of intermediate growth which we will use is due to S.~Rosset \cite{rosset:76}.

\begin{thm}\label{rosset} If $G$ is a finitely generated group which does not grow exponentially and
 $H$ is a normal subgroup such that $G/H$ is solvable, then $H$ is finitely generated.
\end{thm}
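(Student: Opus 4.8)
The plan is to induct on the derived length $d$ of the solvable quotient $Q=G/H$, reducing at every stage to the crucial special case in which the quotient is the infinite cyclic group $\mathbb{Z}$. If $d=0$ then $H=G$ is finitely generated. For the inductive step let $K\lhd G$ be the preimage of the commutator subgroup $[Q,Q]$, so that $G/K\cong Q/[Q,Q]$ is a finitely generated abelian group while $K/H\cong[Q,Q]$ is solvable of derived length $d-1$. I would first show that $K$ is finitely generated (this is the abelian-quotient case, treated below); since $K$ is then a finitely generated subgroup of $G$, its growth is dominated by that of $G$, so $K$ also has non-exponential growth. Applying the inductive hypothesis to $H\lhd K$ yields that $H$ is finitely generated. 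Thus everything reduces to the case $G/H$ abelian. A finitely generated abelian group is a finite extension of some $\mathbb{Z}^k$: passing to the preimage of the torsion-free part replaces $G$ by a finite-index (hence finitely generated, non-exponential) subgroup and reduces to $G/H\cong\mathbb{Z}^k$, which I then peel off one factor at a time, choosing $M\lhd G$ with $G/M\cong\mathbb{Z}$ and $M/H\cong\mathbb{Z}^{k-1}$ and inducting on $k$. So it all comes down to the cyclic case.

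It remains to prove the key lemma (due to Milnor): \emph{if $G$ is finitely generated of non-exponential growth and $\phi\colon G\to\mathbb{Z}$ is surjective with kernel $H$, then $H$ is finitely generated.} Fix $t\in G$ with $\phi(t)=1$; replacing each generator $g$ by $g\,t^{-\phi(g)}$ we may assume $G$ is generated by $t$ together with finitely many $a_1,\dots,a_r\in H$. Writing $\theta$ for conjugation by $t$, one checks that $H=\langle\theta^j(a_i):j\in\mathbb{Z}\rangle$, so setting $H_n=\langle\theta^j(a_i):|j|\le n\rangle$ gives an ascending chain with union $H$. A short computation shows that if $H_n=H_{n+1}$ for a single $n$ then $H_n$ is $\theta$-invariant and therefore equals $H$; hence if $H$ is \emph{not} finitely generated the chain is strictly increasing, and then at least one of the one-sided chains $\langle\theta^j(a_i):0\le j\le n\rangle$, $\langle\theta^{-j}(a_i):0\le j\le n\rangle$ fails to stabilize (were both to stabilize, $H$ would be generated by the two resulting finite sets).

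The heart of the matter is to convert non-stabilization into \emph{exponential} growth. Here the naive attempt, using the $2^n$ products $\theta^0(a)^{\epsilon_1}\cdots\theta^{n-1}(a)^{\epsilon_n}$, fails: since $\theta^j(a)=t^j a t^{-j}$ has length of order $j$, these elements lie only in a ball of radius of order $n^2$ and yield at best $\gamma_G(n)\succeq e^{\sqrt n}$, which is consistent with intermediate growth and gives no contradiction. The decisive point is to use genuinely short words: the elements $w_\epsilon=t\,a^{\epsilon_1}\,t\,a^{\epsilon_2}\cdots t\,a^{\epsilon_n}$ with $\epsilon_k\in\{0,1\}$ have length at most $2n$, and the telescoping identity $w_\epsilon=t^n\,\theta^{-(n-1)}(a)^{\epsilon_1}\theta^{-(n-2)}(a)^{\epsilon_2}\cdots\theta^{0}(a)^{\epsilon_n}$ shows, by a ``largest index of disagreement'' argument, that the $w_\epsilon$ are pairwise distinct exactly when the relevant one-sided chain does not stabilize (if it is the other chain that grows, one runs the same construction with $t^{-1}$ in place of $t$). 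Distinctness of $2^n$ elements inside the ball of radius $2n$ forces $\gamma_G(n)\succeq e^{n}$, contradicting non-exponential growth. Therefore the chain stabilizes and $H$ is finitely generated.

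The step I expect to be the main obstacle is precisely this last one. The tension is that conjugates $\theta^j(a)$ are the natural generators to track, yet they are long, so one must repackage them into short words of length $O(n)$ while \emph{simultaneously} ensuring that non-stabilization of the chain is equivalent to injectivity of the short-word family. Getting the matching right—pairing ``which one-sided chain grows'' with ``which short-word family is injective,'' and carrying out the bookkeeping cleanly when several generators $a_i$ are present—is the delicate part; once this exponential-growth dichotomy is established, the reductions in the first two paragraphs are routine applications of the facts that finite-index subgroups are finitely generated and that finitely generated subgroups inherit non-exponential growth.
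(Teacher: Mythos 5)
Your proof is correct, but it takes a route the paper itself never spells out: the paper quotes this theorem from Rosset's article \emph{without proof}, and what it actually proves is the generalization Theorem~\ref{rosset1} (finitely generated $G$ with no free subsemigroup on two generators, elementary amenable quotient), mentioning Milnor's lemma on the finite generation of $\langle y, xyx^{-1}, x^2yx^{-2},\dots\rangle$ only as a historical remark. Your argument is essentially Milnor's and Rosset's original one: induction on the derived length of $G/H$, reduction through $\mathbb{Z}^k$ and a finite-index passage to the case $G/H\cong\mathbb{Z}$, and there the exponential-growth dichotomy obtained by the telescoping trick, which repackages the long conjugates $t^{j}at^{-j}$ into the $2^n$ words $ta^{\epsilon_1}ta^{\epsilon_2}\cdots ta^{\epsilon_n}$ of length at most $2n$; your reductions are sound, as is the observation that a single equality $H_n=H_{n+1}$ forces invariance under conjugation by $t^{\pm1}$ and hence stabilization. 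The paper's proof of the more general Theorem~\ref{rosset1} is structurally parallel---an induction on a complexity of the quotient, with the cyclic case as the engine---but it inducts transfinitely on elementary complexity and outsources the two key steps to the Longobardi--Rhemtulla lemmas, which hold under the strictly weaker hypothesis that $G$ has no free subsemigroup; thus your approach buys a self-contained elementary argument tied directly to growth, while the paper's buys greater generality at the price of imported lemmas. One harmless overstatement in your write-up: pairwise distinctness of the $w_\epsilon$ is not \emph{equivalent} to non-stabilization of the one-sided chain (in $BS(1,2)=\langle t,a\mid tat^{-1}=a^{2}\rangle$ the positive chain stabilizes immediately, yet the corresponding word family is injective); your proof, however, uses only the direction you actually establish, namely that a coincidence $w_\epsilon=w_{\epsilon'}$ forces stabilization, so nothing breaks.
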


We propose the following  generalization of this result.

 \begin{thm}\label{rosset1} Let $G$ be a finitely generated group with no
free subsemigroup on two generators and let the quotient $G/N$ be an elementary amenable group.  Then the
kernel $N$ is a finitely generated group.
\end{thm}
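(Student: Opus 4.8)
The plan is to reduce the statement to the special case of an infinite cyclic quotient and then bootstrap through Chou's hierarchy by transfinite induction. Since $G$ is finitely generated and $G/N\in EG$, the quotient $Q:=G/N$ is a finitely generated elementary amenable group, so $Q\in EG_\alpha$ for some ordinal $\alpha$. I will prove, by induction on $\alpha$, the statement $P(\alpha)$: for every finitely generated group $G$ with no free sub-semigroup on two generators and every $N\lhd G$ with $G/N\in EG_\alpha$, the subgroup $N$ is finitely generated. Everything rests on the following strengthening of the mechanism behind Theorem~\ref{rosset}, in which the subexponential-growth hypothesis is replaced by the absence of a free sub-semigroup: \emph{if $G$ is finitely generated, has no free sub-semigroup on two generators, and $G/M\cong\mathbb{Z}$, then $M$ is finitely generated} (the Key Lemma).

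To prove the Key Lemma, fix $t\in G$ mapping to a generator of $\mathbb{Z}$, let $\phi$ be conjugation by $t$, and note that $M$ is generated by the $\phi$-orbit of a finite set $S$, so the subgroups $M_k=\langle\phi^{j}(s):|j|\le k,\ s\in S\rangle$ form an ascending chain with union $M$; finite generation of $M$ is equivalent to stabilization of this chain. Suppose the chain does not stabilize. The idea is to manufacture a free sub-semigroup: for a suitably chosen $c\in M$ the elements $t$ and $ct$ should generate one, since every positive word in them reduces to the form $\bigl(\prod_i\phi^{i}(c)^{\delta_i}\bigr)t^{n}$ with $\delta_i\in\{0,1\}$, and the failure of the chain to stabilize is exactly what should force distinct patterns $(\delta_i)$ to give distinct products. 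This contradicts the hypothesis, so $M$ is finitely generated.

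With the Key Lemma available I treat the base case $P(0)$, where $Q$ is finite or finitely generated abelian. If $Q$ is finite then $N$ has finite index in $G$ and is therefore finitely generated. If $Q$ is finitely generated abelian, I pass to the finite-index preimage $G'\le G$ of the free part $\mathbb{Z}^{d}\le Q$; then $G'$ is finitely generated, inherits the absence of a free sub-semigroup, and satisfies $G'/N\cong\mathbb{Z}^{d}$, so an induction on $d$ using the Key Lemma (choosing $M$ with $G'/M\cong\mathbb{Z}$ and $M/N\cong\mathbb{Z}^{d-1}$) finishes this case. For the inductive step I use that each $EG_\alpha$ is closed under subgroups and quotients. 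Limit ordinals are immediate, since $G/N\in EG_\alpha=\bigcup_{\beta<\alpha}EG_\beta$ already lies in some $EG_\beta$. If $\alpha=\gamma+1$, realize $Q$ as an extension with $B\lhd Q$, $B\in EG_\gamma$ and $Q/B\in EG_\gamma$, and let $M$ be the preimage of $B$, so $N\lhd M\lhd G$ with $G/M\cong Q/B\in EG_\gamma$ and $M/N\cong B\in EG_\gamma$. Applying $P(\gamma)$ to the pair $(G,M)$ gives that $M$ is finitely generated, and applying $P(\gamma)$ to $(M,N)$ (with $M$ again free of a free sub-semigroup as a subgroup of $G$) gives that $N$ is finitely generated, completing the induction.

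I expect the main obstacle to be the Key Lemma, and specifically establishing \emph{freeness} of the sub-semigroup generated by $t$ and $ct$ when $M$ is non-abelian. One must choose $c$ deep enough in the non-stabilizing chain that the ordered products $\prod_i\phi^{i}(c)^{\delta_i}$ are genuinely distinct for distinct patterns, rather than merely numerous; in the abelian (module-theoretic) reduction this distinctness is transparent, but in general it is the delicate point where the combinatorics of the $\phi$-orbit must be controlled, and it is exactly here that the hypothesis of no free sub-semigroup is consumed.
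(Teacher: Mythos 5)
Your global architecture coincides with the paper's: transfinite induction on Chou's elementary complexity of the quotient, a successor step handled by pulling back $B\lhd Q$ and applying the inductive hypothesis twice (first to $(G,M)$, then to $(M,N)$, using that absence of free subsemigroups passes to subgroups), limit ordinals vacuous, and everything resting on the case of an infinite cyclic quotient. The one real difference is that the paper does \emph{not} prove your Key Lemma: it imports it wholesale, quoting Lemmas 1 and 2 of Longobardi and Rhemtulla \cite{rhemtulla_long:freesemig95}, whose conjunction is literally your Key Lemma (Lemma 1: no free subsemigroups implies every $\langle a^{b^n}:n\in\mathbb{Z}\rangle$ is finitely generated; Lemma 2: if in addition $G$ is finitely generated and $G/N$ is cyclic, then $N$ is finitely generated). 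Your proposal instead attempts to prove the Key Lemma, and that attempt contains a genuine gap.

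Concretely, you reduce to showing that if the chain $M_k=\langle\phi^{j}(s):|j|\le k,\ s\in S\rangle$ fails to stabilize, then some $c\in M$ makes $t,ct$ a free pair, and for this you offer only the assertion that non-stabilization ``should force'' distinct patterns $(\delta_i)$ to yield distinct products $\prod_i\phi^{i}(c^{\delta_i})$, with $c$ ``chosen deep enough in the chain.'' But this implication is the \emph{entire} content of the theorem -- it is the only point where the no-free-subsemigroup hypothesis enters -- and no choice of $c$ ``deep in the chain'' produces it; the hypothesis has to be applied to explicit pairs and exploited contrapositively, orbit by orbit and in both directions. Namely: write $M=\langle O_1,\dots,O_k\rangle$ with $O_i=\langle\phi^{j}(m_i):j\in\mathbb{Z}\rangle$; if each $O_i$ is finitely generated so is $M$, so suppose some $O=O_i$ is not, and set $c=m_i$. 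If $(t,ct)$ is not free, two distinct patterns give equal products, and cancelling the common prefix expresses some $\phi^{j_0}(c)$ as a word in strictly higher conjugates; iterating this containment under $\phi^{-1}$ gives $O=\langle\phi^{j}(c):j\ge0\rangle$. If $(t^{-1},ct^{-1})$ is also not free, the symmetric relation expresses a conjugate in terms of strictly lower ones, and the two containments together force $O$ to be finitely generated -- a contradiction. Hence one of the two pairs is free (and after inverting, even a pair of your form $(t,c't)$ with $c'=\phi(c)^{-1}\in M$), contradicting the hypothesis. This two-sided relation-extraction is exactly what your sketch defers; with it supplied -- or with the Key Lemma simply cited from \cite{rhemtulla_long:freesemig95}, as the paper does -- your induction is correct and is essentially the paper's proof. (Your treatment of the abelian base case, passing to the finite-index preimage of the free part and inducting on the rank, is in fact more explicit than the paper's, which invokes the cyclic-quotient lemma for a general finitely generated abelian quotient without spelling out that iteration.)
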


The latter two statements and the chain of further statements of the same spirit that appeared in the
literature were initiated by the following lemma of Milnor \cite{milnor:solv68}: if $G$ is a finitely
generated group with subexponential growth, and if $x, y \in G$,
 then the group generated by the set of conjugates $y, xyx^{-1}, x^2yx^{-2}, \dots $ is finitely generated.

\begin{proof}

 For the proof of the Theorem~\ref{rosset1} we will apply induction on elementary complexity $\alpha$ of the quotient group $H=G/N$.  If complexity is 0
then the  group is either finite or abelian.  In the first case $N$ is finitely generated for obvious reason.
In the second case we apply the following  statements from  the paper of P.~Longobardi and A.~Rhemtulla
\cite[Lemmas 1,2]{rhemtulla_long:freesemig95}.

\begin{lem}If G has no free subsemigroups, then for all $a, b \in G$ the subgroup $ \langle a^{b^n}, n\in \mathbb Z \rangle$
 is finitely generated.
\end{lem}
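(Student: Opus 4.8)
The plan is to set $a_n = a^{b^n} = b^{-n} a b^n$ and study $H = \langle a_n : n \in \mathbb{Z}\rangle$, observing that $H$ is the normal closure of $a$ in $\langle a, b\rangle$ and that conjugation by $b$ acts on $H$ as the index shift $a_n \mapsto a_{n+1}$. First I would dispose of an easy dichotomy according to the order of $b$ modulo $H$. If $b^m \in H$ for some $m \geq 1$, then $\langle a, b\rangle / H$ is finite cyclic, so $H$ has finite index in the $2$-generated (hence finitely generated) group $\langle a, b\rangle$, and finite-index subgroups of finitely generated groups are finitely generated. Thus I may assume $b$ has infinite order modulo $H$, in which case the assignment $a \mapsto 0$, $b \mapsto 1$ extends to a homomorphism $\phi \colon \langle a, b\rangle \to \mathbb{Z}$ with kernel $H$.

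The core of the argument applies the hypothesis to the pair $s = b$, $t = ab$. Since the subsemigroup they generate is not free, there are two distinct positive words $P, Q$ in $s, t$ representing the same element of $G$. As $\phi(s) = \phi(t) = 1$, applying $\phi$ forces $P$ and $Q$ to have the same length $\ell$. The computational heart is the normal form: writing each letter as $a^{\eps} b$ with $\eps \in \{0,1\}$ and pushing all the $b$'s to the right yields $P = a_0^{\eps_1} a_{-1}^{\eps_2}\cdots a_{-(\ell-1)}^{\eps_\ell}\, b^{\ell}$, and similarly $Q$ with exponents $\delta_i$. Cancelling the common $b^{\ell}$ gives an identity among the $a_{-k}$. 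I would then exploit the first and the last positions where $(\eps_i)$ and $(\delta_i)$ differ: cancelling the agreeing prefix and solving at the first differing index $j$ expresses $a_{-(j-1)}$ as a word in the strictly more negative generators $a_{-j}, \ldots, a_{-(\ell-1)}$, while cancelling the agreeing suffix and solving at the last differing index $j'$ expresses $a_{-(j'-1)}$ as a word in the strictly less negative generators $a_0, \ldots, a_{-(j'-2)}$.

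Finally I would propagate both relations by the shift automorphism. Conjugating by powers of $b$ turns the first relation into a uniform recurrence $a_n \in \langle a_{n-1}, \ldots, a_{n-d}\rangle$ with $d = \ell - j$, valid for every $n$, and the second into $a_n \in \langle a_{n+1}, \ldots, a_{n+q}\rangle$ with $q = j' - 1$, valid for every $n$. The first recurrence reduces every $a_n$ with $n \geq 0$ into $\langle a_0, a_{-1}, \ldots, a_{-(d-1)}\rangle$, and the second reduces every $a_n$ with $n \leq 0$ into $\langle a_0, a_1, \ldots, a_{q-1}\rangle$; together they give $H = \langle a_n : -(d-1) \leq n \leq q-1\rangle$, which is finitely generated. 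The step I expect to be the main obstacle is ensuring that a single relation controls both tails: this needs $P$ and $Q$ to differ in at least two positions, so that $j < \ell$ and $j' > 1$ both hold and the two extracted words are genuinely nontrivial. The remaining case, when $P$ and $Q$ differ in exactly one position, I would treat separately by cancelling both prefix and suffix, which forces $a_{-k} = e$ for some $k$ and hence, after shifting, $H = \{e\}$; thus $H$ is finitely generated in every case.
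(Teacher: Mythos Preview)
The paper does not supply its own proof of this lemma; it is quoted from Longobardi and Rhemtulla and invoked as a black box in the proof of Theorem~\ref{rosset1}. So there is no in-paper argument to compare against.

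Your argument is correct and is essentially the standard proof of the result. The dichotomy on the order of $b$ modulo $H$ cleanly disposes of the finite-index case. In the infinite-order case, your normal form
\[
P \;=\; a_0^{\eps_1}a_{-1}^{\eps_2}\cdots a_{-(\ell-1)}^{\eps_\ell}\,b^{\ell}
\]
for a positive word in $s=b$, $t=ab$ is right, and extracting two shift-equivariant recurrences from the first and last indices of disagreement is exactly the mechanism that bounds the generating set of $H$ from both sides. Your handling of the degenerate single-difference case (forcing some $a_{-k}=e$, hence $H=\{e\}$ after shifting) is also fine; in the complementary case $1\le j<j'\le\ell$ already forces $j<\ell$ and $j'>1$, so both recurrences are automatically nondegenerate, and your worry about this step is unfounded.
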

\begin{lem}Let $G$ be a finitely generated group. If $N \trianglelefteq G, G/N$ is cyclic, and  $\langle a^{b^n}, n\in \mathbb Z \rangle$
 is finitely generated for all
$a, b \in G$, then $N$ is finitely generated.
\end{lem}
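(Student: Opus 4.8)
The plan is to produce an explicit finite generating set for $N$, after first disposing of a trivial case. If $G/N$ is a \emph{finite} cyclic group, then $N$ has finite index in $G$, and a finite-index subgroup of a finitely generated group is itself finitely generated, so there is nothing more to do. Hence I may assume $G/N\cong\mathbb{Z}$ is infinite cyclic.

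Fix $t\in G$ whose image generates $G/N$, and let $q\colon G\to\mathbb{Z}$ be the corresponding homomorphism with $q(t)=1$ and $\ker q=N$. Starting from any finite generating set of $G$ and replacing each generator $g$ by $g\,t^{-q(g)}\in N$, I obtain a finite generating set of the shape $\{t,n_1,\dots,n_m\}$ with every $n_j\in N$. The heart of the argument is the structural identity
\[
N=K,\qquad\text{where}\quad K:=\bigl\langle\, t^{-s}n_j t^{s}\ :\ s\in\mathbb{Z},\ 1\le j\le m\,\bigr\rangle .
\]

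To establish $N=K$, the inclusion $K\subseteq N$ is immediate, so the work is the reverse inclusion, which I would get by showing $K\trianglelefteq G$. Conjugation by $t$ carries $t^{-s}n_j t^{s}$ to $t^{-(s+1)}n_j t^{s+1}$, so $t$ normalizes $K$; and because each $n_j=t^{0}n_j t^{0}$ already lies in $K$, conjugation by $n_j$ preserves $K$ as well. Since $\{t,n_1,\dots,n_m\}$ generates $G$, this gives $K\trianglelefteq G$. In the quotient $G/K$ the images of the $n_j$ are trivial, so $G/K$ is cyclic, generated by $tK$; and $tK$ has infinite order, since $t^{r}\in K\subseteq N$ would force $(tN)^{r}=N$, contradicting $G/N\cong\mathbb{Z}$. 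Thus $G/K\cong\mathbb{Z}$ and the canonical surjection $G/K\twoheadrightarrow G/N$ sends a generator to a generator, hence is an isomorphism, which forces $K=N$.

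Finally I would feed the hypothesis into this identity. For each fixed $j$ the subgroup $\langle t^{-s}n_j t^{s}:s\in\mathbb{Z}\rangle$ is precisely $\langle a^{b^{n}}:n\in\mathbb{Z}\rangle$ with $a=n_j$ and $b=t$, and so is finitely generated by assumption. As $N=K$ is generated by these finitely many finitely generated subgroups, the union of their finite generating sets is a finite generating set for $N$. The only genuine content is the identity $N=K$; I expect the verification that $K$ is normal --- specifically its stability under conjugation by the $n_j$, which relies on $n_j\in K$ --- to be the step most easily mishandled, so that is where I would take care.
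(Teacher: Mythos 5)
Your proof is correct. There is, however, no argument in the paper to compare it against: the lemma is stated inside the proof of Theorem~\ref{rosset1} and is quoted, together with the lemma preceding it, from Longobardi and Rhemtulla \cite[Lemmas 1,2]{rhemtulla_long:freesemig95}, with no proof given for either. Your argument is the natural (and, up to presentation, the standard) one for statements of this Milnor type: dispose of the finite-index case, then for $G/N\cong\mathbb Z$ choose $t$ mapping to a generator, normalize the remaining generators of $G$ into $N$, and identify $N$ with the subgroup $K$ generated by the conjugates $t^{-s}n_jt^{s}$, at which point the hypothesis applied with $a=n_j$, $b=t$ exhibits $N$ as generated by finitely many finitely generated subgroups. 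All the steps check out: $\{t,n_1,\dots,n_m\}$ does generate $G$; $K$ is normal in $G$ because conjugation by $t^{\pm1}$ merely shifts the generating conjugates while each $n_j$ lies in $K$ and so normalizes it; and $K=N$ follows from your comparison of the two infinite cyclic quotients (alternatively, and even more directly, any $g\in N$ is a word in $t^{\pm1},n_j^{\pm1}$, and pushing all occurrences of $t$ to the right using normality of $K$ writes $g$ as an element of $K$ times $t^{q(g)}=t^{0}$). So your proposal supplies a correct, self-contained proof of a statement that the paper leaves entirely to the literature.
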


Assume that the statement of the theorem is correct for quotients $H=G/H$ with complexity $\alpha \leq
\beta-1$ for some ordinal $\beta, \beta \geq 1$.    The group $H$, being finitely generated, allows a short
exact sequence

\[\{1\}\rightarrow A \rightarrow H \rightarrow B \rightarrow \{1\},\]
where $A,B \in EG_{\beta -1}$. Let $\varphi: G\rightarrow G/N$ be the canonical homomorphism and $M=
\varphi^{-1}(A)$. Then  $M$ is a normal subgroup in $G$ and $G/M\simeq G/N/M/N\simeq H/A \simeq B$. By the
inductive assumption $M$ is finitely generated and has no free subsemigroup on two generators.  As $M/N\simeq
A$, again by induction, $N$ is finitely generated and we are done.

\end{proof}

We will discuss  just-infinite groups in detail  in the last section.  But let us prove now a preliminary
result which will be used later.  Recall that a group is called just-infinite if it is infinite, but every
proper quotient is finite (i.e. every nontrivial normal subgroup is of finite index). A group  $G$ is called
hereditary just-infinite if it is residually finite and every subgroup $H < G$ of finite index  (including
$G$ itself) is just infinite.  Observe that a subgroup of finite index of a hereditary just-infinite group is
hereditary just-infinite.

We learned the following result from Y. de Cournulier.  A proof is provided here as there  is no one in the
literature.
\begin{thm} \label{element} Let $G$ be a finitely generated hereditary just-infinite group, and suppose that $G$  belongs to the class $EG$ of elementary amenable
groups.  Then $G$ is isomorphic either to the infinite cyclic group $\mathbb Z$  or to the infinite dihedral
group $D_{\infty}$.
\end{thm}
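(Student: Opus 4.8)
The plan is to induct on the elementary complexity $\alpha$ of $G$, exploiting the fact that just-infiniteness forces every nontrivial normal subgroup to have finite index. First I would record two standing observations. Since $G$ is infinite and just-infinite it has \emph{no} nontrivial finite normal subgroup (such a subgroup would have infinite index); and any finite-index subgroup of $G$ is again finitely generated, elementary amenable, and hereditary just-infinite, the last point being exactly the observation made just before the theorem. For the base case $\alpha=0$ the group $G$ is finite or abelian, hence, being infinite, a finitely generated abelian group; the absence of nontrivial finite normal subgroups kills its torsion part, and just-infiniteness rules out $\mathbb{Z}^n$ for $n\geq 2$ (which has infinite normal subgroups of infinite index), leaving $G\cong\mathbb{Z}$.

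For the inductive step I would write $\alpha=\gamma+1$, noting that the complexity of a group is never a limit ordinal by minimality. By the definition of $EG_{\gamma+1}$ there is a normal subgroup $A\trianglelefteq G$ with $A\in EG_{\gamma}$ and $G/A\in EG_{\gamma}$, and minimality of $\alpha$ forces $A$ to be \emph{nontrivial and proper} (otherwise $G$ itself would lie in $EG_{\gamma}$). Being a nontrivial normal subgroup of a just-infinite group, $A$ has finite index; hence $A$ is finitely generated, infinite, hereditary just-infinite, and of complexity $\leq\gamma<\alpha$. By the induction hypothesis $A\cong\mathbb{Z}$ or $A\cong D_{\infty}$, so in either case $G$ contains a finite-index copy of $\mathbb{Z}$; that is, $G$ is virtually $\mathbb{Z}$. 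Note that the quotient $G/A$ plays no further role.

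It then remains to classify a just-infinite virtually-$\mathbb{Z}$ group. Passing if necessary to the characteristic infinite cyclic subgroup of index $2$ inside a $D_{\infty}$, I may assume $A\cong\mathbb{Z}$ is a finite-index normal subgroup. Conjugation gives a homomorphism $G\to\mathrm{Aut}(A)\cong\set{\pm 1}$, whose kernel $C=C_G(A)$ is normal of index at most $2$ and contains $A$ as a central subgroup of finite index. By Schur's theorem $[C,C]$ is finite; being characteristic in $C$ it is normal in $G$, so it must be trivial, whence $C$ is abelian, finitely generated and just-infinite and therefore $C\cong\mathbb{Z}$. If $C=G$ this yields $G\cong\mathbb{Z}$; otherwise any $t\in G\setminus C$ inverts $A$, and a short computation forces $t^2=1$, giving $G\cong D_{\infty}$. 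The main obstacle I anticipate is not a single hard step but the bookkeeping of the induction: verifying that the successor stage genuinely produces a nontrivial proper normal subgroup of strictly smaller complexity (so that the just-infinite hypothesis can upgrade it to a finite-index subgroup) and that hereditary just-infiniteness is correctly inherited by finite-index subgroups. Once the group is known to be virtually $\mathbb{Z}$, the two-ended endgame via Schur's theorem and the nonexistence of finite normal subgroups is routine.
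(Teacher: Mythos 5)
Your proposal is correct and follows essentially the same route as the paper's own proof: induction on elementary complexity, using just-infiniteness to force the normal subgroup $A \trianglelefteq G$ from the extension step to have finite index (hence $A$ is finitely generated hereditary just-infinite, so $A \cong \mathbb{Z}$ or $D_{\infty}$ by induction), and then the virtually-$\mathbb{Z}$ endgame via the conjugation action on an infinite cyclic normal subgroup, Schur's central-by-finite theorem, and the absence of nontrivial finite normal subgroups. Your write-up is in fact slightly more careful than the paper's at two points it leaves implicit --- that minimality of the complexity forces $A$ to be nontrivial and proper, and the detailed classification in the base case --- but these are expository refinements, not a different argument.
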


\begin{proof}  If $G \in EG_0$ then $G$ is abelian and hence $G\simeq \mathbb Z$.  Assume that the statement is
correct for all groups from classes $EG_{\alpha}, \alpha <  \beta$ for some ordinal $\beta$. Let us prove it
for $\beta $.  Assume  $G \in EG_{\beta}$ and $\beta$ is smallest with this property.   $\beta$ can not be a
limit ordinal because $G$ is finitely generated. Therefore $G$ is the extension of a group $A$ by a group
$B=G/A$, where $A,B \in EG_{\beta -1}$.  In fact $B$ is a finite group (as $G$ is just-infinite). As a
subgroup of finite index in a hereditary just-infinite group,  $A$ is hereditary just-infinite and moreover
finitely generated (as a subgroup of finite index in a finitely generated group). By inductive assumption $A$
is isomorphic either to the infinite cyclic group $\mathbb Z$  or to the infinite dihedral group
$D_{\infty}$. In particular $G$ has a normal subgroup  $H$ of finite index isomorphic to $\mathbb Z$.

Let $G$ act on $H$ by conjugation.  Then we get a homomorphism $\psi:G\rightarrow Aut(H) \simeq \mathbb Z
_2$. If $\psi(G)=\{1\}$, then $H$ is a central subgroup. It is a standard fact in group theory (see for
instance \cite[Proposition 2.4.4] {karpilovsky:schur})  that if there is a central subgroup of finite index
in $G$ then the commutator subgroup $G'$ is finite. But as $G$ is just-infinite, $G'=\{1\}$ and so $G$ is
abelian, hence $G\simeq \mathbb Z$ in this case.

If $\psi(G)=Aut(H)$ then $N= ker \psi $  is a centralizer $C_G(H)$ of $H$ in $G$. Subgroup $N$ has index 2 in
$G$, is just-infinite and hence by the same reason as above $N'=\{1\}$, so $N$ is abelian. Being finitely
generated and just infinite implies $N\simeq \mathbb Z$.

Let $x\in G, x \notin N$. The element  $x$ acts on $N$ by conjugation mapping each  element to its inverse.
In particular, $x^{-1}(x^2)x=x^{-2}$, so $(x^2)^2=1$. But $x^2 \in N$. Since $N$ is torsion free $x^2=1$.
Therefore
\[G=\langle x,N\rangle=\langle x,y:x^2=1, x^{-1}yx=y^{-1}\rangle \simeq D_{\infty},\]
where $y$ is a generator of $N$.

\end{proof}

\section{Gap Conjecture for residually solvable groups}

Recall that a group $G$ is said to be a residually finite-$p$ group (sometimes also called residually finite
$p$-group) if it is approximated by finite $p$-groups, i.e., for any $g\in G$ there is a finite $p$-group $H$
and a homomorphism $\phi:G\rightarrow H$ with $\phi(g)\neq 1$. This class is, of course, smaller than the
class of residually finite groups, but it is pretty large.  For instance, Golod-Shafarevich groups,
$p$-groups $\mathcal G_{\omega}$ from \cite{grigorch:degrees,grigorch:degrees85}, and many other groups
belong to this class.

\begin{thm} \label{gap1} (\cite{grigorch:hilbert})  Let $G$ be a finitely generated residually finite-$p$    group.
If $\gamma_G(n)\prec e^{\sqrt{n}}$ then $G$ has polynomial growth.
\end{thm}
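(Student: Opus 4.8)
The plan is to exploit the fact that a residually finite-$p$ group admits a rich filtration by its lower central $p$-series (the Zassenhaus or Jennings filtration), and that the growth of the group controls the growth of the associated graded Lie algebra over $\mathbb F_p$. The key idea is that subexponential growth of $G$ forces the associated graded object to be "small," and one then transfers a quantitative statement about the dimensions of the graded pieces back into a lower bound on the growth function of $G$. I would first set up the dimension (or lower central $p$-) series $G = G_1 \supseteq G_2 \supseteq \cdots$ with $[G_i,G_j]\subseteq G_{i+j}$ and $G_i^p \subseteq G_{pi}$, and consider the graded $\mathbb F_p$-vector spaces $L_i = G_i/G_{i+1}$. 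The assumption that $G$ is residually finite-$p$ guarantees $\bigcap_i G_i = \{1\}$, so the filtration genuinely separates points.

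The heart of the argument is the comparison between $\gamma_G(n)$ and the dimensions $d_i=\dim_{\mathbb F_p} L_i$. First I would show that an element of $G_i$ that is nontrivial in $L_i$ has word length at least of order $i$ (up to a multiplicative constant depending on the generating set), because the filtration is defined by iterated commutators and $p$-th powers of generators, each of which lengthens the word. Conversely, products of representatives of basis elements from $L_1,\dots,L_i$ give distinct elements of $G$ once one fixes a normal form coming from a Hall-type basis of the associated graded algebra. Counting such normal forms shows that $\gamma_G(Cn)$ is at least the number of monomials of total weighted degree $\le n$ in the graded structure, which is roughly $\prod_i$ of a factor reflecting $d_i$. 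The assumption $\gamma_G(n)\prec e^{\sqrt n}$ then imposes an upper bound on how fast the partial sums $\sum_{i\le n} d_i$ (or the weighted dimension counts) may grow.

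The crucial quantitative step is to convert the bound $\gamma_G(n)\prec e^{\sqrt n}$ into the conclusion that $d_i=0$ for all large $i$, i.e.\ that the filtration terminates, which means $G$ is (virtually) nilpotent and hence of polynomial growth by Gromov's theorem. Concretely, if infinitely many $d_i$ were nonzero, the counting estimate would produce elements of length $\le Cn$ in number at least $e^{c\sqrt n}$ for a constant $c$ that does \emph{not} decay, contradicting $\gamma_G(n)\prec e^{\sqrt n}$; the exponent $\sqrt n$ is exactly the threshold because the weighted degree $\sum i\cdot(\text{exponents})\le n$ produces a count on the order of $e^{\sqrt n}$ when the $d_i$ are bounded below, matching the growth of the partition-type sum $\sum_{i} i \le n$. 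The main obstacle, and the place requiring the most care, is making the lower bound on $\gamma_G$ rigorous: one must produce genuinely distinct group elements (not just distinct images in the graded algebra) and control their word lengths simultaneously, which requires a careful choice of normal form and a Hall basis argument together with the fact that the Jennings filtration of a residually finite-$p$ group has $\bigcap G_i=\{1\}$. Once this lower bound $\gamma_G(n)\succeq e^{c\sqrt n}$ holds whenever the filtration is infinite, the contrapositive gives that $\gamma_G(n)\prec e^{\sqrt n}$ forces finite nilpotency length, and Gromov's theorem (or directly the polynomial growth of nilpotent groups) finishes the proof.
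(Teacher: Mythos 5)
Your first half --- the Zassenhaus/lower $p$-central filtration $G=G_1\supseteq G_2\supseteq\cdots$, the graded restricted Lie algebra with pieces $L_i=G_i/G_{i+1}$, and the PBW/normal-form counting that bounds $\gamma_G$ from below by the coefficients of the Hilbert--Poincar\'{e} series of the restricted enveloping algebra --- is exactly the machinery of the proof cited in the paper (relation (23) and Lemma 8 of \cite{grigorch:hilbert}). The gap is in your concluding dichotomy. You claim that $\gamma_G(n)\prec e^{\sqrt n}$ forces $d_i=0$ for all large $i$, because ``if infinitely many $d_i$ were nonzero'' the monomial count would reach $e^{c\sqrt n}$. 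Both halves of this are wrong. First, if $d_i=0$ for all $i\geq N$ then $G_N=G_{N+1}=\cdots$, and $\bigcap_i G_i=\{1\}$ gives $G_N=\{1\}$; since each $G_i/G_{i+1}$ is a finite elementary abelian $p$-group, termination of the filtration means $G$ is a \emph{finite} $p$-group, not merely nilpotent. So your argument, if it worked, would prove that every finitely generated residually finite-$p$ group of growth $\prec e^{\sqrt n}$ is finite --- which $\mathbb{Z}$ refutes. Second, and this is the real gap, ``infinitely many nonzero $d_i$'' does not yield a partition-type count. For $G=\mathbb{Z}$ (residually finite-$p$ for every $p$) one has $G_i=p^{\lceil\log_p i\rceil}\mathbb{Z}$, so $d_i=1$ exactly when $i$ is a power of $p$, and the filtration never terminates; the monomials of weighted degree $\leq n$ in variables of degrees $1,p,p^2,\dots$, each with exponent $<p$, are precisely the base-$p$ expansions of the integers $\leq n$, so the count is $n+1$ --- linear, not $e^{c\sqrt n}$. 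The $e^{c\sqrt n}$ asymptotics of partition counting require the nonzero degrees of the Lie algebra to appear with positive density; mere infinitude gives nothing, and precisely the groups in the conclusion of the theorem (infinite virtually nilpotent ones) live in this sparse regime.

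Closing this gap is the actual content of the theorem: one must show that the sparse regime occurs \emph{only} for virtually nilpotent groups, i.e.\ the correct dichotomy is ``virtually nilpotent versus $\gamma_G^A(n)\geq a_n^{(p)}$'' (where $a_n^{(p)}$ are the coefficients of $\prod_{n\geq 1}(1-z^{pn})/(1-z^n)$), not ``terminating versus non-terminating filtration.'' This is where nontrivial input enters that your proposal does not contain: when the graded Lie algebra is small (bounded dimensions $d_i$, eventually uniform filtration), Lazard's theory shows the pro-$p$ closure of $G$ is $p$-adic analytic, hence $G$ --- which embeds into it by residual finiteness-$p$ --- is linear, and then the Tits alternative together with the Milnor--Wolf theorem on solvable groups shows that a linear group of subexponential growth is virtually nilpotent; this is the mechanism behind Lemma 1.7 of \cite{lubotzky_mann:polyn91} and the passage from Theorem~\ref{gap1} to Theorem~\ref{lubmann} described in the paper. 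Only in the complementary ``dense'' case does the Hilbert--Poincar\'{e} lower bound produce $e^{\sqrt n}$. (A minor point: ``nilpotent implies polynomial growth'' is due to Wolf and Guivarc'h; Gromov's theorem is the converse implication, which this argument does not need.)
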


As was established by the author in a discussion with A.~Lubotzky and A.~Mann during the conference on
profinite groups in Oberwolfach in 1990,
  the same arguments as given in \cite{grigorch:degrees} combined with the following  lemma
 from \cite{lubotzky_mann:polyn91}

\begin{lem} [Lemma 1.7, \cite{lubotzky_mann:polyn91}] Let $G$ be a finitely generated residually nilpotent group. Assume that for every prime $p$ the
pro-$p$-closure $G_{\hat p}$ of $G$  is $p$-adic analytic. Then $G$ is linear.

\end{lem}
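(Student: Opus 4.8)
The plan is to realize $G$ as a subgroup of a single linear group by assembling the given $p$-adic analytic completions. First I would invoke Lazard's characterization of $p$-adic analytic pro-$p$ groups: each $G_{\hat p}$ is then a pro-$p$ group of finite rank and admits a faithful continuous embedding $G_{\hat p}\hookrightarrow \mathrm{GL}_{d_p}(\mathbb{Z}_p)$ for some integer $d_p$. Composing the canonical map $\pi_p\colon G\to G_{\hat p}$ with this embedding gives, for every prime $p$, a representation $\rho_p\colon G\to \mathrm{GL}_{d_p}(\mathbb{Z}_p)$.

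Second, I would use residual nilpotence to separate points across the family $\{\rho_p\}$. If $g\neq 1$ then $g\notin\gamma_n(G)$ for some $n$, so $g$ has nontrivial image in the finitely generated nilpotent quotient $G/\gamma_n(G)$; since finitely generated nilpotent groups are residually finite-$p$ (the torsion-free part for every $p$, the torsion part for the primes dividing the relevant order), this image survives in $(G/\gamma_n(G))_{\hat p}$, and hence in $G_{\hat p}$, for a nonempty set of primes. Consequently $\bigcap_p\ker\pi_p=1$, and the product map $G\hookrightarrow\prod_p G_{\hat p}$ is injective.

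Third, and this is the technical heart, I would bound the dimensions $d_p$ uniformly, say $d_p\le r$ for all $p$. The dimension of the analytic group $G_{\hat p}$ is read off from the stabilized $\mathbb{F}_p$-ranks of the successive quotients of the lower $p$-central series of $G$, which are in turn governed by the associated graded Lie ring $\bigoplus_i\gamma_i(G)/\gamma_{i+1}(G)$, generated in degree one by the finitely generated abelian group $G^{\mathrm{ab}}$. The point is that analyticity is assumed \emph{simultaneously} for every $p$: were the total rank $\sum_i \mathrm{rank}\,(\gamma_i(G)/\gamma_{i+1}(G))$ infinite, then for infinitely many $p$ the completion $G_{\hat p}$ would fail to have finite rank, contrary to hypothesis. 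Hence this sum is a finite number, which (after enlarging to absorb the finitely many exceptional primes) provides a uniform bound $d_p\le r$. This yields $G\hookrightarrow\prod_p\mathrm{GL}_r(\mathbb{Z}_p)=\mathrm{GL}_r(\widehat{\mathbb{Z}})$. I expect this uniform bound to be the main obstacle, since it is exactly the step where the simultaneous analyticity of all the completions must be converted into a single finiteness statement about the Lie ring of $G$.

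Finally I would descend to an honest field of characteristic zero. Fixing a nonprincipal ultrafilter $\mathcal{U}$ on the set of primes, the diagonal map into $\prod_p\mathrm{GL}_r(\mathbb{Q}_p)$ followed by reduction modulo $\mathcal{U}$ lands in $\mathrm{GL}_r(K)$, where $K=\prod_p\mathbb{Q}_p/\mathcal{U}$ is a field of characteristic zero. By the second step every element of infinite order is detected by a cofinite set of primes, hence is nontrivial in this ultraproduct representation; the torsion elements are separated by only finitely many primes, all lying in a finite set $\Pi$ (the primes that can divide the order of a finite-order matrix in $\mathrm{GL}_r(\mathbb{Z}_p)$ being bounded in terms of $r$). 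Adding the finitely many representations $\rho_p$ with $p\in\Pi$ and embedding all the target fields into a common algebraically closed field $\Omega$ of characteristic zero then produces a single faithful finite-dimensional representation $G\hookrightarrow\mathrm{GL}_N(\Omega)$. Thus $G$ is linear, completing the proof.
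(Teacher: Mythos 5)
This lemma is not proved in the paper at all: it is imported verbatim from Lubotzky--Mann \cite{lubotzky_mann:polyn91}, so the only benchmark is their proof, and your attempt must stand on its own. Your first two steps are fine (Lazard's theorem gives the embeddings $G_{\hat p}\hookrightarrow GL_{d_p}(\mathbb{Z}_p)$, and residual nilpotence together with the splitting of finite nilpotent groups into Sylow subgroups gives $\bigcap_p\ker\pi_p=1$). The fatal problem is the claim in your last step that every element of infinite order is detected by a cofinite set of primes. This is false. Take the Klein bottle group $G=\langle a,b \mid bab^{-1}=a^{-1}\rangle$: it is finitely generated, residually nilpotent (one computes $\gamma_n(G)=\langle a^{2^{n-1}}\rangle$), torsion free, and all of its pro-$p$ closures are $p$-adic analytic ($\mathbb{Z}_2\rtimes\mathbb{Z}_2$ for $p=2$, and $\mathbb{Z}_p$ for odd $p$), so it satisfies every hypothesis of the lemma. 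Yet $a$, an element of infinite order, dies in every finite $p$-quotient with $p$ odd: if $\phi(b)$ has odd order $p^k$, then conjugation by $\phi(b)^{p^k}=1$ sends $\phi(a)$ to $\phi(a)^{(-1)^{p^k}}=\phi(a)^{-1}$, forcing $\phi(a)^2=1$ and hence $\phi(a)=1$. So $a$ is detected only by the prime $2$; the set of odd primes is cofinite, hence lies in any nonprincipal ultrafilter $\mathcal U$, and your ultraproduct representation kills $a$. Since $G$ is torsion free, your finite correction set $\Pi$ --- which you introduce only to catch torsion elements --- repairs nothing, so your construction yields a non-faithful representation of a group covered by the lemma. (As an aside, the parenthetical claim that the torsion primes of $GL_r(\mathbb{Z}_p)$ are bounded in terms of $r$ alone is also wrong: $\mathbb{Z}_p^\times$ contains the $(p-1)$-st roots of unity.)

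Your third step has a gap of the same nature. From analyticity of even a single $G_{\hat p}$ one does get that $\sum_i \mathrm{rank}\,(\gamma_i(G)/\gamma_{i+1}(G))$ is finite, since $G_{\hat p}$ surjects onto the pro-$p$ completion of each finitely generated nilpotent quotient $G/\gamma_{i+1}$, whose dimension is its Hirsch length. But finiteness of that sum does not bound $d_p=\dim G_{\hat p}$: torsion in the (infinitely many) lower central factors can add dimension, and the same example shows it --- for the Klein bottle group the total free rank is $1$ while $\dim G_{\hat 2}=2$. Your phrase ``after enlarging to absorb the finitely many exceptional primes'' is precisely the assertion that such torsion contributions are confined to finitely many primes and are uniformly bounded; that assertion is the real content of the uniformity step, and you offer no argument for it. So both halves of what you yourself call the technical heart --- the uniform bound on $d_p$ and the assembly of the local representations into one faithful linear representation --- are missing, and the assembly step is refuted by an explicit example; the actual argument of \cite{lubotzky_mann:polyn91} must handle exactly these points differently.
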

\noindent allows one to prove a stronger version of the above theorem (see  the Remark after Theorem~1.8 in
\cite{lubotzky_mann:polyn91}):

\begin{thm}\label{lubmann}
Let $G$ be a residually nilpotent finitely generated group. If $\gamma_G(n)\prec e^{\sqrt{n}}$ then $G$ has
polynomial growth.
\end{thm}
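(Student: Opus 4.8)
The plan is to use the pro-$p$ closures of $G$ as a bridge between the residually nilpotent hypothesis and the residually finite-$p$ setting, so that Theorem~\ref{gap1} becomes applicable prime by prime, and then to feed the resulting information into the lemma of Lubotzky and Mann quoted above. Concretely, I would show that the growth bound forces every pro-$p$ closure $G_{\hat p}$ to be $p$-adic analytic, conclude via that lemma that $G$ is linear, and finish with the Tits alternative together with the Milnor--Wolf theorem.

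First I would fix a prime $p$ and let $K_p$ be the kernel of the natural map from $G$ into its pro-$p$ completion, so that $Q_p := G/K_p$ is the maximal residually finite-$p$ quotient of $G$ and its pro-$p$ completion is precisely $G_{\hat p}$. Being a quotient of $G$, the group $Q_p$ is finitely generated and satisfies $\gamma_{Q_p}(n) \preceq \gamma_G(n) \prec e^{\sqrt n}$. Hence Theorem~\ref{gap1} applies to the residually finite-$p$ group $Q_p$ and shows that $Q_p$ has polynomial growth; by Gromov's theorem \cite{gromov:poly_growth} it is therefore virtually nilpotent. (If $Q_p$ is finite, the remaining step below is trivial.)

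Next I would upgrade this to $p$-adic analyticity of $G_{\hat p}$. A finitely generated virtually nilpotent group is polycyclic-by-finite, hence embeds in $GL_n(\mathbb{Z})$ for some $n$, so its pro-$p$ completion is a closed subgroup of $GL_n(\mathbb{Z}_p)$ and is thus $p$-adic analytic. Since $G_{\hat p}$ is the pro-$p$ completion of $Q_p$, I obtain that $G_{\hat p}$ is $p$-adic analytic, and as $p$ was arbitrary this holds for every prime. Because $G$ is residually nilpotent, the hypothesis of the Lubotzky--Mann lemma is now met, and that lemma gives that $G$ is linear. Finally, $G$ is a finitely generated linear group of subexponential growth (as $\gamma_G(n)\prec e^{\sqrt n}\prec e^n$), so by the Tits alternative \cite{tits:alternative72} it is either virtually solvable or contains a nonabelian free subgroup; the latter is excluded since a free subgroup would force exponential growth. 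Thus $G$ is virtually solvable, and by Milnor--Wolf \cite{milnor:solv68,wolf:growth68} a finitely generated virtually solvable group of subexponential growth has polynomial growth.

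I expect the crux to be the step certifying that each pro-$p$ closure is $p$-adic analytic: this is where the quantitative content of Theorem~\ref{gap1} is genuinely used, and where one must verify that the pro-$p$ completion of a virtually nilpotent group is $p$-adic analytic uniformly across all primes in order to trigger the Lubotzky--Mann lemma. The remaining steps are an assembly of Gromov's theorem, the Tits alternative, and Milnor--Wolf. I note also that the ``same arguments as in \cite{grigorch:degrees}'' alluded to in the text supply a more direct route to $p$-adic analyticity: subexponential growth of $Q_p$ controls the dimensions of the factors of its lower $p$-central series, forcing the associated graded Lie algebra to have polynomial growth and hence $G_{\hat p}$ to have finite rank, which bypasses Gromov's theorem.
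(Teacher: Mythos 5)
Your overall architecture coincides with the paper's: verify the hypothesis of the Lubotzky--Mann lemma (that every pro-$p$ closure $G_{\hat p}$ is $p$-adic analytic), conclude that $G$ is linear, and finish with the Tits alternative plus Milnor--Wolf. Your way of verifying analyticity is a legitimate variant of the paper's: you apply Theorem~\ref{gap1} as a black box to the maximal residually finite-$p$ quotient $Q_p$ and then invoke Gromov's theorem, whereas the paper re-runs the lower $p$-central series / Hilbert--Poincar\'e series argument of \cite{grigorch:hilbert} to get analyticity directly (a route you correctly describe in your closing paragraph). The preliminary reductions are fine: $Q_p$ is residually finite-$p$, its pro-$p$ completion is canonically isomorphic to $G_{\hat p}$, and $\gamma_{Q_p}(n)\prec e^{\sqrt n}$ follows from $\gamma_{Q_p}\preceq\gamma_G\prec e^{\sqrt n}$.

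There is, however, a genuine error in the step you yourself identify as the crux. It is false that if a group $H$ embeds in $GL_n(\mathbb{Z})$ then its pro-$p$ completion is a closed subgroup of $GL_n(\mathbb{Z}_p)$. The closure of $H$ in $GL_n(\mathbb{Z}_p)$ is a quotient of the \emph{profinite} completion of $H$; it need not be pro-$p$, and there is in general no map at all from the pro-$p$ completion to $GL_n(\mathbb{Z}_p)$ extending the inclusion. Concretely, a free subgroup $F_2\le SL_2(\mathbb{Z})$ has pro-$p$ completion the free pro-$p$ group of rank $2$, which has infinite rank and so is not $p$-adic analytic, hence is isomorphic to no closed subgroup of any $GL_m(\mathbb{Z}_p)$ (closed subgroups of $p$-adic analytic groups are analytic, by Lazard). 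So linearity over $\mathbb{Z}$ cannot be the reason analyticity holds; what saves you is the structure of $Q_p$ itself. A finitely generated virtually nilpotent group is virtually polycyclic and therefore has finite Pr\"ufer rank $r$: every subgroup is generated by at most $r$ elements (bound $r$ by the length of a polycyclic series plus the number of generators needed for a finite group of order equal to the index). Consequently every finite $p$-quotient of $Q_p$ has rank at most $r$, so $G_{\hat p}=(Q_p)_{\hat p}$ is a pro-$p$ group of finite rank, and pro-$p$ groups of finite rank are exactly the $p$-adic analytic ones (Lazard; see also \cite{lubotzky_mann:polyn91}). With this substitution the remaining steps---Lemma 1.7, the Tits alternative, and Milnor--Wolf---are applied correctly and your proof goes through.
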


To be linear means to be isomorphic to a subgroup of the linear group $GL_n(\mathbb K)$ for some field
$\mathbb K$.   By Tits alternative \cite{tits:alternative72} every finitely generated linear group either
contains a free subgroup on two generators or is virtually solvable. Hence the above lemma immediately
reduces Theorem~\ref{lubmann} to  Theorem~\ref{gap1}.

The latter two theorems (where the first one  is the corresponding statement from \cite{grigorch:hilbert}
while the second one is  a corrected form of what is stated in  Remark on page 527
in~\cite{lubotzky_mann:polyn91}) show that  Gap Conjecture $G(1/2)$ holds for the class of residually
finite-$p$ groups and more generally for the class of residually nilpotent groups.  In fact, arguments
provided in \cite{grigorch:hilbert,lubotzky_mann:polyn91} allow to prove  stronger conjecture $G^*(1/2)$ for
these classes of groups.

Let  $p$ be a prime and $a_n^{(p)}$ be the $n$-th coefficient of  the power series given by

\[\sum_{n=0}^{\infty}a_n^{(p)}z^n=\prod_{n=1}^{\infty}\frac{1-z^{pn}}{1-z^n}.\]
Then   the lower bound $a_n^{(p)} \succeq e^{\sqrt{n}}$ holds. Moreover if a group $G$ is a residually
finite-$p$ group and is not virtually nilpotent then  for any  system of generators $A$
\[\gamma_G^A(n) \geq a_n^{(p)}, n=1,2,\dots\]
(see the relation (23) and Lemma 8 in~\cite{grigorch:hilbert}). Observe that the latter statement  is valid
not only in the case when $A$   is a system of elements that generate $G$ as a group but even in a more
general case when $A$ is a generating set for the group $G$ considered as a semigroup.  In fact,   growth
function of any group is bounded from below by a  sequence of coefficients of Hilbert-Poincar\'{e} series of
the universal $p$-enveloping algebra of the restricted Lie $p$-algebra associated with the group using the
factors of the lower $p$-central series~\cite{grigorch:hilbert}.

  Theorem~1.8 from  \cite{lubotzky_mann:polyn91} contains an
interesting approach  to polynomial growth type theorems in the case of residually nilpotent groups.
Moreover, as is mentioned in \cite{lubotzky_mann:polyn91} in the remark after the theorem,  the proof
provided  there yields the same conclusion  under a weaker assumption:  $\gamma_G(n)\prec 2^{2^{\sqrt{\log_2
n}}}$.

Surprisingly, in his first paper  on the gap type problem \cite{wilson:growthsolv05} Wilson used a similar
upper bound   $\gamma_G(n)\prec e^{e^{(1/2) \sqrt {\ln n}}}$ to measure  size  of a gap for residually
solvable groups. Wilson's approach is quite different from those that were used before and is based on
exploring self-centralizing chief factors in finite solvable groups.

Recall  that a chief factor of a group $G$ is a (nontrivial) minimal normal subgroup of some quotient $G/N$,
and that $L/M$ is a self-centralizing chief factor of a group $G$ if $M$ is normal in $G$, $L/M$ is a minimal
normal subgroup of $G/M$, and $L/M=C_{G/M}(L/M)$. One of the results in  \cite{wilson:growthsolv05}  is

\begin{thm}[Wilson] \label{wilson2}Let $G$ be a residually solvable group of subexponential growth whose finite
self-centralizing chief factors all have rank at most $k$. Then $G$ has a residually nilpotent normal
subgroup whose index is finite and bounded in terms of $k$ and $\gamma_G(n)$.

If, in addition $\gamma_G(n)\prec e^{\sqrt n}$, then $G$ has a nilpotent normal subgroup whose index is
finite and bounded in terms of $k$ and $\gamma_G(n)$.

\end{thm}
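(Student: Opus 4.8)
The plan is to push the problem down to the finite solvable structure visible inside $G$, extract there a nilpotent normal subgroup of uniformly bounded index, and reassemble these into a residually nilpotent subgroup of $G$ of finite index. Subexponential growth gives two tools I will use throughout: by the Adelson-Velskii theorem $G$ is amenable, and a free subsemigroup on two generators would force growth $\succeq 2^n$, so $G$ contains none. Consequently Theorem~\ref{rosset1} applies to each solvable (hence elementary amenable) quotient $G/G^{(n)}$, so every derived term $G^{(n)}$ is finitely generated; and residual solvability is exactly the statement $\bigcap_n G^{(n)}=\{1\}$, whence the abelian layers $G^{(n)}/G^{(n+1)}$ are finitely generated abelian groups. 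The goal is a normal subgroup $D\lhd G$ of finite index, bounded in terms of $k$ and $\gamma_G(n)$, whose image in every finite solvable section of $G$ is nilpotent; such a $D$ is residually nilpotent once one checks, using $\bigcap_n G^{(n)}=\{1\}$, that its lower central series also intersects trivially.

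The structural engine is the analysis of the finite self-centralizing chief factors, exactly as indicated in the excerpt. For a finite chief factor $V=L/M$ (with $M\lhd G$ and $V$ minimal normal in $G/M$) the factor is elementary abelian of rank $d$ over some $\mathbb{F}_p$, and the self-centralizing hypothesis $C_{G/M}(V)=V$ produces a faithful embedding $(G/M)/V \hookrightarrow \operatorname{Aut}(V)\cong GL_d(\mathbb{F}_p)$. Since this image is a solvable linear group of degree $d\leq k$, it has derived length bounded by some $\ell(k)$ depending only on $k$, by the classical Mal'cev--Zassenhaus bound on solvable linear groups. This is the mechanism by which the rank hypothesis caps the complexity of the non-central finite action, and, working over the Fitting decomposition of each finite section (using $C_Q(F(Q))=Z(F(Q))\leq F(Q)$ for finite solvable $Q$), it lets one peel the action off prime by prime.

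The step I expect to be the main obstacle is turning the rank bound into a bound on the \emph{index} of a nilpotent normal subgroup that is moreover \emph{uniform} across all the finite sections. The difficulty is genuine: the full upper-triangular subgroup of $GL_d(\mathbb{F}_p)$ is solvable of bounded derived length yet has order and nilpotent defect growing with $p$, so the rank bound alone cannot bound the index, and one must prove that subexponential growth prevents the primes $p$ and the non-central action from proliferating along the derived tower. The mechanism is that an unbounded accumulation of genuinely non-central self-centralizing factors yields a section of $G$ of affine or metabelian type on which Milnor's conjugation lemma fails, equivalently a free subsemigroup on two generators, contradicting subexponential growth. Making this quantitative---combining the absence of a free subsemigroup with the finite generation of the derived terms supplied by Theorem~\ref{rosset1} to cap, uniformly, the index of a nilpotent normal subgroup in each finite section---is the crux; intersecting the resulting subgroups then delivers $D$, with index bounded in terms of $k$ and $\gamma_G(n)$.

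For the strengthened conclusion under the additional hypothesis $\gamma_G(n)\prec e^{\sqrt n}$, I would feed the output of the first part into the residually nilpotent case already available. The subgroup $D$ is finitely generated, being of finite index in the finitely generated group $G$, and it inherits the bound $\gamma_D(n)\prec e^{\sqrt n}$ because growth is a commensurability invariant. Theorem~\ref{lubmann} then forces $D$ to have polynomial growth, so by Gromov's theorem $D$ is virtually nilpotent; passing to the Fitting subgroup of a finite-index nilpotent subgroup produces a nilpotent normal subgroup of finite index in $G$. The quantitative form of Gromov's theorem recorded in Theorem~\ref{gromoveffect} supplies the promised index bound in terms of $k$ and $\gamma_G(n)$, completing the plan.
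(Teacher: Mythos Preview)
First, note that the paper does not itself prove this theorem: it is quoted from Wilson~\cite{wilson:growthsolv05}, and the paper only records that Wilson's argument rests on the ultraproduct lemma stated immediately afterwards (Lemma~2.1 of~\cite{wilson:growthsolv05}): for each $k$ and each subexponential function $\alpha$ there is a bound $B(k,\alpha)$ such that any finite solvable group $Q$ with $\gamma_Q^A(n)\le e^{\alpha(n)}$ possessing a self-centralizing minimal normal subgroup $V$ of rank $\le k$ satisfies $|Q/V|\le B(k,\alpha)$. The theorem is then deduced from this uniform bound applied down the derived series.

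Your plan correctly isolates the same bottleneck---a uniform index bound across the finite solvable sections---but it does not actually establish it. The sentence ``an unbounded accumulation of genuinely non-central self-centralizing factors yields a section of $G$ of affine or metabelian type on which Milnor's conjugation lemma fails, equivalently a free subsemigroup on two generators'' is not an argument: every section you are looking at is \emph{finite}, so none contains a free subsemigroup, and nothing you have written explains how failure of a uniform bound over a family of finite sections produces an infinite object on which the subexponential-growth hypothesis bites. That passage to a limit is exactly what Wilson's ultraproduct supplies: assuming no bound, one takes an ultraproduct of a witnessing sequence of finite solvable groups and obtains an infinite group still satisfying the growth inequality and still carrying a self-centralizing minimal normal subgroup of rank $\le k$, where linearity of the quotient in $GL_k$ and Milnor--Wolf can be invoked. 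Your Mal'cev--Zassenhaus observation bounds the \emph{derived length} of $(G/M)/V$, which is correct but far weaker than bounding its \emph{order}; you yourself point out that upper-triangular groups show derived length alone cannot do the job. So the crux step, as you candidly label it, remains a genuine gap, and the missing idea is precisely the ultraproduct (or an equivalent compactness/limiting device).

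Your treatment of the second clause is sound and in line with what the paper indicates: once a residually nilpotent normal subgroup $D$ of bounded finite index is in hand, the hypothesis $\gamma_G(n)\prec e^{\sqrt n}$ passes to $D$, and Theorem~\ref{lubmann} together with the effective form of Gromov's theorem (Theorem~\ref{gromoveffect}) yields a nilpotent normal subgroup with the required index bound.
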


The proof of this result is based on the following lemma the proof of which uses ultraproducts.

\begin{lem} [Lemma 2.1, \cite{wilson:growthsolv05}] Let $k$ be a positive integer and $\alpha:\mathbb N \rightarrow \mathbb R_{+}$ a
function such that $\alpha(n)/n \to 0 $ as $n  \to \infty$. Suppose that $G$ is a finite solvable group
having (i) a self-centralizing minimal normal subgroup $V$ of rank at most $k$ and (ii) a generating set $A$
such that $\gamma_G^A(n)\leq e^{\alpha(n)}$ for all $n$. Then $|G/V|$ is bounded in terms of $k$ and $\alpha$
alone.
\end{lem}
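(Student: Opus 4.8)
The plan is to argue by contradiction using an ultraproduct, turning the finite characteristic-$p$ data into a characteristic-zero linear picture where the Milnor--Wolf theorem and the structure theory of solvable linear groups apply. Suppose the conclusion fails: then for each candidate bound there is a counterexample, so I obtain finite solvable groups $G_i$ with self-centralizing minimal normal subgroups $V_i$ of rank $\le k$, generating sets $A_i$ with $\gamma_{G_i}^{A_i}(n)\le e^{\alpha(n)}$, and $|G_i/V_i|\to\infty$. First I would record the cheap reductions. Since $\gamma_{G_i}^{A_i}(1)\le e^{\alpha(1)}$, the sets $A_i$ have bounded size, so after refining I may assume $|A_i|=m$ for all $i$. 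Each $V_i$ is elementary abelian, $V_i\cong\mathbb F_{p_i}^{d_i}$ with $d_i\le k$, and self-centrality makes $Q_i:=G_i/V_i$ act faithfully and irreducibly, i.e. $Q_i\hookrightarrow GL_{d_i}(\mathbb F_{p_i})$; by Mal'cev's theorem a solvable subgroup of $GL_d$ has derived length bounded by a function $D(d)$ of $d$ alone, so the $Q_i$ (hence the $G_i$) have bounded derived length. Because a faithful irreducible $Q_i$ has order at most $|GL_d(\mathbb F_{p_i})|$, the assumption $|Q_i|\to\infty$ forces $p_i\to\infty$; fixing a non-principal ultrafilter $\mathcal U$ I may take $d_i\equiv d$ and form the field $\mathbb K=\prod_{\mathcal U}\mathbb F_{p_i}$, which has characteristic $0$.

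Next I pass to the limit group. Set $G_\infty=\prod_{\mathcal U}G_i$, $V_\infty=\prod_{\mathcal U}V_i\cong\mathbb K^d$ (an abelian normal subgroup), and let $H=\langle\bar a_1,\dots,\bar a_m\rangle$ be generated by the images of the marked generators, with $W=H\cap V_\infty$ and $\bar Q=H/W\hookrightarrow GL_d(\mathbb K)$. Two properties transfer to $H$. Since every ball count $\gamma_{G_i}^{A_i}(n)\le e^{\alpha(n)}$ is uniformly bounded, the number of elements of $H$ represented by words of length $\le n$ is $\le e^{\alpha(n)}$, so $\gamma_H(n)\le e^{\alpha(n)}$ and $H$ has subexponential growth. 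The image $\bar Q$ is infinite (if the $m$ marked generators generated a group of some fixed finite order in $\mathcal U$-many $Q_i$, that order would equal $|Q_i|$, contradicting $|Q_i|\to\infty$), and it is solvable of derived length $\le D(d)$ as a subgroup of an ultraproduct of such groups. Hence $H$ is solvable, and being finitely generated of subexponential growth it is virtually nilpotent by the Milnor--Wolf theorem. The essential structural input is that $W\ne 0$: a self-centralizing minimal normal subgroup equals the Fitting subgroup $F(G_i)$, so the last nontrivial term $G_i^{(r)}$ of the derived series (with $r\le D(d)$ bounded, constant along $\mathcal U$) is a nonzero abelian normal subgroup inside $V_i$, whence $G_i^{(r)}=V_i$ by minimality; thus $V_i$ is generated by the finitely many derived commutators of the generators of fixed depth $r$, and some fixed such commutator word is nontrivial for $\mathcal U$-many $i$, producing a nonzero element of $W$.

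The contradiction then comes from a dichotomy for the action of $\bar Q$ on $U:=\mathbb K\text{-span}(W)\ne 0$. If some $g\in\bar Q$ acts on the span of its orbit of some $0\ne w\in W$ with an eigenvalue that is not a root of unity, then the two-generated metabelian subgroup $\langle h,w\rangle\le H$ (with $h$ a preimage of $g$) is not virtually nilpotent, so by Milnor--Wolf it has exponential growth---impossible inside the subexponential group $H$. Otherwise every element of $\bar Q$ acts on $U$ with eigenvalues roots of unity; since $\bar Q$ is virtually nilpotent, a finite-index subgroup is unipotent on $U$ and therefore fixes a nonzero vector $v\in U\subseteq V_\infty$. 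Pulling this back, a subgroup of bounded index in $Q_i$ fixes a nonzero $v_i\in V_i$ for $\mathcal U$-many $i$; its normal core fixes a nonzero $Q_i$-invariant subspace, which by irreducibility is all of $V_i$, so the core acts trivially and by faithfulness is trivial, forcing $|Q_i|$ to be bounded---again contradicting $|Q_i|\to\infty$. Either way we reach a contradiction, so $|G/V|$ is bounded in terms of $k$ and $\alpha$.

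I expect the endgame dichotomy to be the main obstacle, together with the point that $W\ne 0$. Everything before it is bookkeeping about ultraproducts, but it is precisely the two hypotheses of the lemma that make the endgame work: self-centrality (faithfulness of the $Q_i$-action) is what lets a surviving fixed vector collapse $|Q_i|$, and minimality (irreducibility, equivalently $V_i=F(G_i)=G_i^{(r)}$) is what both produces the surviving nonzero translation $W\ne0$ and rules out the unipotent alternative; the rank bound $k$ enters only through Mal'cev's bound $D(d)$ on the derived length. The one routine-but-delicate verification I would still need to nail down is that the non-root-of-unity eigenvalue in the first branch is genuinely visible on $W$ (equivalently, that the orbit $\{g^j w\}$ spans a module on which $g$ is not virtually unipotent), which is exactly what the failure of the second branch supplies.
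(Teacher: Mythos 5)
Your overall frame---counterexample sequence, ultraproduct, bounded derived length, the uniform bound $\gamma_H(n)\le e^{\alpha(n)}$ for the subgroup $H$ generated by the marked generators, then Milnor--Wolf---is the right one (the paper itself gives no proof of this lemma; it quotes it from Wilson's article and says only that the proof uses ultraproducts). But the proof breaks at the step you yourself identify as essential, namely $W=H\cap V_\infty\neq 0$. Your justification is that ``$V_i$ is generated by the finitely many derived commutators of the generators of fixed depth $r$.'' This is false: already $G'$ is the \emph{normal closure} of the commutators of the generators, not the subgroup they generate (in the free group $F_2$ the derived subgroup is not generated by $[a,b]$), and the same failure propagates through the derived series. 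What your argument needs is a single word $u$ of bounded length with $u^{(i)}\in V_i\setminus\{1\}$ for $\mathcal U$-many $i$, and nothing supplies it: the elements of $G_i^{(r)}=V_i$ are words of a priori unbounded length in $A_i$. This is not a cosmetic slip, because if $W=0$ then $H\cong \bar Q$ embeds in $GL_d(\mathbb K)$ and the facts you have (finitely generated, solvable, subexponential growth, hence virtually nilpotent) produce no contradiction whatsoever: an infinite cyclic group acting on $\mathbb Q^2$ by a hyperbolic integer matrix is faithful, irreducible and virtually nilpotent, so your endgame dichotomy has nothing to bite on. A second defect: in the unipotent branch you ``pull back'' a finite-index subgroup of $\bar Q$ with a fixed vector to a bounded-index subgroup of $Q_i$. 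Finite-index data does not transfer back through an ultraproduct---such a subgroup consists of words of unbounded length and induces no subgroup of $Q_i$ of controlled index. (What does transfer is invariance: a subspace of $V_\infty$ invariant under the marked generators is an ultraproduct of subspaces invariant under $A_i$, hence $G_i$-invariant, hence $0$ or $V_i$ by minimality; this ``irreducibility upstairs'' is the correct way to use hypothesis (i), and it is absent from your argument.)

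The idea that actually closes the ultraproduct argument, and which is missing from your proposal, is finite presentability. Since $H$ is finitely generated and virtually nilpotent, it is finitely presented; its finitely many defining relators are words of bounded length that die in $H$, hence die in $G_i$ for $\mathcal U$-many $i$ simultaneously. For such $i$ the assignment $\bar a_j\mapsto a_j^{(i)}$ therefore extends to an epimorphism $\phi_i:H\rightarrow G_i$ (onto because $A_i$ generates $G_i$). Now take $K\lhd H$ nilpotent of finite index $c$. Then $\phi_i(K)$ is a nilpotent normal subgroup of the finite solvable group $G_i$, hence lies in the Fitting subgroup $F(G_i)$; and for a self-centralizing minimal normal subgroup one has $F(G_i)=V_i$ (by minimality and nilpotency $[V_i,F(G_i)]=1$, so $F(G_i)\le C_{G_i}(V_i)=V_i$). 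Hence $\phi_i(K)\le V_i$ and $|G_i/V_i|\le |H/K|=c$ for $\mathcal U$-many $i$, contradicting $|G_i/V_i|\to\infty$. This route needs neither $W\neq 0$, nor eigenvalue dichotomies, nor any pull-back of subgroups: hypotheses (i) and (ii) enter exactly through Fitting theory and through Milnor--Wolf, respectively.
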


 One of the almost  immediate corollaries of the technique developed in  \cite{wilson:growthsolv05} are the
 facts  stated below in Theorems \ref{polyciclic} and \ref{wilson6}.

Recall that a group is called supersolvable if it has a finite  normal descending  chain of subgroups with
cyclic quotients. Every finitely generated nilpotent group is supersolvable~\cite{robinson:book96}, and the
symmetric group $Sym(4)$ is the simplest example of a solvable but not supersolvable group.

\begin{thm} \label{polyciclic}
The Gap Conjecture holds for residually supersolvable  groups. Moreover,  the conjecture $G^*(1/2)$ holds for
residually supersolvable groups
\end{thm}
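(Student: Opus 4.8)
The plan is to establish the stronger statement $G^*(1/2)$, since $G^*(\beta)$ implies $G(\beta)$ and hence the Gap Conjecture itself; thus it suffices to show that a finitely generated residually supersolvable group $G$ that is \emph{not} virtually nilpotent satisfies $\gamma_G(n) \succeq e^{\sqrt n}$. If $G$ has exponential growth then $\gamma_G(n) \sim e^n \succeq e^{\sqrt n}$ and there is nothing to prove, so I may assume from the outset that $G$ has subexponential growth. The goal is then to feed $G$ into Wilson's Theorem~\ref{wilson2}, whose hypotheses are exactly residual solvability, subexponential growth, and a uniform rank bound on the finite self-centralizing chief factors. Residual supersolvability certainly entails residual solvability, so the whole game is to supply the rank bound with the optimal value $k=1$.

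Here is where supersolvability, as opposed to mere solvability, should enter: every chief factor of a finite supersolvable group is cyclic, i.e. has rank $1$. I would exploit this as follows. Let $V = L/M$ be a finite self-centralizing chief factor of $G$. Because $C_{G/M}(V) = V$ and $(G/M)/V$ embeds in the finite group $\mathrm{Aut}(V)$, the quotient $G/M$ is finite, so $M$ has finite index. Writing $\mathcal N$ for the (downward directed, intersection-closed) family of normal subgroups $N$ with $G/N$ supersolvable, residual supersolvability gives $\bigcap_{N\in\mathcal N} N = \{1\}$. If one can locate $N \in \mathcal N$ with $L \not\le MN$, then, $V$ being minimal normal in $G/M$, the $G/M$-invariant subgroup $(N\cap L)M/M$ is either all of $V$ or trivial; the former forces $L\le MN$ and is excluded, so $N\cap L \le M$, and a Dedekind-law computation identifies $V$ with $LN/MN$, a chief factor of the \emph{supersolvable} group $G/N$, which is therefore cyclic. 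This would give the rank bound $k=1$.

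The main obstacle is precisely the production of such an $N$, equivalently the verification that the self-centralizing chief factors actually governing Wilson's argument survive into a finite supersolvable quotient and hence are cyclic. This is delicate, because a residually supersolvable group can carry finite self-centralizing chief factors of rank $>1$ that \emph{collapse} in every supersolvable quotient (for instance $\mathbb Z^2\rtimes C_3$, which is even of polynomial growth, produces a rank-$2$ such factor through a non-supersolvable quotient), so the rank-$1$ condition cannot be read off crudely from the full family of finite quotients. The correct move is to run Wilson's technique relative to the approximating family $\{G/N : N\in\mathcal N\}$ of finite supersolvable quotients, in which all chief factors are cyclic, rather than relative to the family of all finite quotients; this is the sense in which supersolvability, and not merely solvability, is what yields $k=1$ for this class. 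Granting that this produces the rank bound $k=1$, Wilson's Theorem~\ref{wilson2} furnishes a normal subgroup $R \lhd G$ of finite index that is residually nilpotent. Since $G$ is not virtually nilpotent, neither is its finite-index subgroup $R$; as $R$ is finitely generated and residually nilpotent, the $G^*(1/2)$ strengthening of Theorem~\ref{lubmann} yields $\gamma_R(n) \succeq e^{\sqrt n}$, and since growth is invariant under passage to a subgroup of finite index we conclude $\gamma_G(n) \sim \gamma_R(n) \succeq e^{\sqrt n}$, as required.
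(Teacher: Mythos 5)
Your overall strategy is exactly the one the paper intends --- and it is worth saying up front that the paper itself offers \emph{no} proof of Theorem~\ref{polyciclic}: it is asserted as ``one of the almost immediate corollaries of the technique developed in'' \cite{wilson:growthsolv05}, the implicit derivation being precisely your chain: cyclic chief factors of supersolvable groups $\Rightarrow$ Wilson's machinery with $k=1$ $\Rightarrow$ a residually nilpotent normal subgroup $R$ of finite index $\Rightarrow$ the $G^*(1/2)$ statement for residually nilpotent groups (the strengthening of Theorem~\ref{lubmann} noted in the paper) $\Rightarrow$ the conclusion, via finite-index invariance of growth. Your endgame is correct, and your central critical observation is also correct, and sharper than anything in the paper's text: Theorem~\ref{wilson2} cannot be cited verbatim with $k=1$, because residual supersolvability does \emph{not} bound the ranks of all finite self-centralizing chief factors of $G$. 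Your example $\mathbb{Z}^2\rtimes C_3 \twoheadrightarrow A_4$ is valid ($\mathbb{Z}^2\rtimes C_3$ is residually supersolvable via reduction modulo primes $p\equiv 1\pmod 3$, where the action diagonalizes, yet $\mathbb{Z}^2/(2\mathbb{Z})^2$ is a rank-two self-centralizing chief factor), and so is your Dedekind-law lemma showing that any self-centralizing chief factor which survives into some supersolvable quotient must be cyclic. This explains why the paper attributes the theorem to Wilson's \emph{technique} rather than to the quoted theorem.

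The genuine gap is the step you yourself flag: ``Granting that this produces the rank bound $k=1$'' is not a legitimate move, and as phrased it is actually circular --- two sentences earlier you \emph{proved} that the rank-$1$ hypothesis of Theorem~\ref{wilson2} can fail for residually supersolvable groups, so it cannot subsequently be granted; it is false, not merely unverified. What has to be supplied instead is a reworking of Wilson's \emph{proof} relative to the residual system of finite supersolvable quotients: in each such quotient every self-centralizing minimal normal subgroup is cyclic, so Wilson's Lemma~2.1 (quoted in the paper) applies with $k=1$ and bounds the index $[G:L]$ uniformly in terms of $\gamma_G$ alone; finite generation of $G$ then yields only finitely many such subgroups $L$, and one must show that their intersection is a normal subgroup of finite index whose image in every finite supersolvable quotient of $G$ is nilpotent, hence is residually nilpotent --- after which your final paragraph takes over. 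None of this structural work appears in your proposal, and it is exactly the content hidden behind the paper's word ``technique''. So: right strategy, a genuinely valuable diagnosis of why the reduction is not a formal consequence of Theorem~\ref{wilson2}, but incomplete as a proof at the one step that carries the mathematical weight.
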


Developing his technique and using the  known facts about maximal primitive solvable subgroups of $GL_n(p)$
($p$ prime)  Wilson in \cite{wilson:gap11} proved that the  Gap Conjecture with parameter $1/6$ holds for
residually solvable groups. In fact  what  follows from arguments in~\cite{wilson:growthsolv05}, combined
with arguments from\cite{grigorch:hilbert,lubotzky_mann:polyn91} and  with what was written above, can be
formulated as

\begin{thm} \label{wilson6}  The conjecture $G^*(1/6)$ holds for residually solvable groups.

\end{thm}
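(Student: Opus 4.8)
The plan is to assemble three ingredients: Wilson's structural dichotomy for residually solvable groups of subexponential growth from \cite{wilson:growthsolv05} (Theorem~\ref{wilson2}, together with the ultraproduct Lemma~2.1 on self-centralizing chief factors); the refinement via bounds on primitive solvable subgroups of $GL_r(p)$ from \cite{wilson:gap11}; and the $*$-version $G^*(1/2)$ for residually nilpotent groups, which the arguments of \cite{grigorch:hilbert,lubotzky_mann:polyn91} establish. So let $G$ be residually solvable and not virtually nilpotent; the goal is the honest lower bound $\gamma_G(n)\succeq e^{n^{1/6}}$. First I would dispose of the exponential case: if $\gamma_G\sim e^n$ there is nothing to prove, since $e^n\succeq e^{n^{1/6}}$. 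Hence I may assume $G$ has subexponential growth; as $G$ is not virtually nilpotent, Gromov's theorem then forces intermediate growth, and in particular the whole machinery of \cite{wilson:growthsolv05} is available.

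Next I would split on the ranks of the finite self-centralizing chief factors of $G$. Suppose first that these ranks are bounded by some integer $k$. Then Theorem~\ref{wilson2} produces a residually nilpotent normal subgroup $N\lhd G$ of finite index. A subgroup of finite index has the same growth type, so $\gamma_N\sim\gamma_G$, and $N$ inherits the property of not being virtually nilpotent (a nilpotent finite-index subgroup of $N$ would be one of $G$ as well). Applying $G^*(1/2)$ for residually nilpotent groups to $N$ gives $\gamma_N\succeq e^{\sqrt n}$, whence $\gamma_G\sim\gamma_N\succeq e^{\sqrt n}\succeq e^{n^{1/6}}$. In this regime one in fact obtains the stronger exponent $1/2$; the loss down to $1/6$ occurs only in the complementary case.

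The remaining, and genuinely hard, case is that the ranks of the finite self-centralizing chief factors of $G$ are unbounded. Here I would follow \cite{wilson:gap11}: a self-centralizing chief factor $L/M=V\cong\mathbb F_p^{\,r}$ makes the finite quotient $G/M$ act faithfully and irreducibly on $V$, so $G/M$ is a solvable primitive linear group of degree $r$ over $\mathbb F_p$. Feeding the known order and structure bounds for such groups (of P\'alfy--Wolf type) into the quantitative Lemma~2.1 of \cite{wilson:growthsolv05} attaches to each such factor of rank $r$ an explicit lower estimate for $\gamma_G$ at a scale depending on $r$; since each $G/M$, being a quotient of $G$, has growth dominated by $\gamma_G$, letting $r$ range over the unbounded set of ranks assembles these estimates into the asymptotic inequality $\gamma_G(n)\succeq e^{n^{1/6}}$. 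The exponent $1/6$ is exactly what this primitive-linear-group input yields once the rank bound of Theorem~\ref{wilson2} is dropped. I expect this quantitative passage to be the main obstacle, and with it the delicate point that distinguishes the $*$-version: one must extract an \emph{honest} inequality valid for all $n$, not merely the implication ``$\gamma_G\prec e^{n^{1/6}}\Rightarrow G$ has polynomial growth'' that yields the non-starred $G(1/6)$. Combining the two sub-cases gives $\gamma_G\succeq e^{n^{1/6}}$ in every case, which is $G^*(1/6)$ for residually solvable groups.
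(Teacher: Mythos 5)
Your proposal is correct and follows essentially the same route as the paper: the paper gives no detailed proof of Theorem~\ref{wilson6}, but justifies it by exactly the combination you describe --- Wilson's chief-factor technique (Theorem~\ref{wilson2} in the bounded-rank case, and the analysis of primitive solvable subgroups of $GL_n(p)$ from \cite{wilson:gap11} in the unbounded-rank case) together with the fact that the arguments of \cite{grigorch:hilbert,lubotzky_mann:polyn91} yield the honest $*$-version $G^*(1/2)$ for residually nilpotent groups, which is precisely your Case A. The quantitative unbounded-rank step and the passage from the implication $G(1/6)$ to the honest lower bound $G^*(1/6)$ are deferred to the cited works in the paper just as in your sketch (one small slip: it is $G/L$, not $G/M$, that acts faithfully on the self-centralizing chief factor $L/M$, since the kernel of the $G/M$-action is $L/M$ itself).
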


There is a hope that eventually  the Gap Conjecture and its $*$-version will be proved for residually
solvable groups, or at least for residually polycyclic groups (which is the same as to prove it for  groups
approximated by finite solvable groups, because polycyclic groups are residually finite
\cite{robinson:book96}). If the latter is done, then we will have  complete reduction of the Gap Conjecture
to just-infinite groups (more on this in the last section).

\section{Gap Conjecture for right orderable groups}

Recall that a group is called right orderable if there is a linear order on the set of its elements invariant
with respect to multiplication on the right. In a similar way are defined left orderable groups. A group is
bi-orderable (or totally orderable) if there is a linear order invariant with respect to multiplication on
the left and on the right. Every right orderable group is left orderable and vise versa but there are right
orderable groups which are not totally orderable (see  \cite{kokorin_kopyt:fully74} for examples). As was
shown by A.~Machi and  the author the class of finitely generated right orderable groups of intermediate
growth is nonempty \cite{grigorch_machi:93}.   The corresponding group $\hat{\mathcal G}$ was earlier
constructed in \cite{grigorch:example} as an example of a torsion free group of intermediate growth. It was
implicitly observed in \cite{grigorch_machi:93} that the class of countable right orderable groups coincides
with the class of groups acting faithfully by homeomorphisms on the line $\mathbb R$ (or, what is the same,
on the interval $[0,1]$).    Recently A.~Erschler and L.~Bartholdi managed to compute the growth of
$\hat{\mathcal G}$ which happens to be $e^{\log(n)n^{\alpha_0}}$  where $\alpha_0=\log 2/\log (2/\rho)
\approx 0.7674$, and $\rho$ is the real root of the polynomial
 $x^3+x^2+x-2$. The question
if there exists a finitely generated, totally orderable group of intermediate growth  is still open.

The Gap Conjecture and it modifications stated in section \ref{conjecture} are interesting problems even for
the class of right orderable groups. Our next result makes some contribution to this topic. The result of
Wilson combined with theorems of Morris~\cite{morris:amenable_gps_acting_on_line} and Rosset~\cite{rosset:76}
can be used to prove the following statement.

\begin{thm}\label{orderable}
(i) The Gap Conjecture  with parameter $1/6$,   and, moreover, the conjecture $G^*(1/6)$ hold for right
orderable groups.

(ii) The Gap Conjecture  $G(1/2)$ (or its $*$-version $G^*(1/2)$) holds for right orderable groups if it (or
its $*$-version $G^*(1/2)$) holds for residually polycyclic groups.
\end{thm}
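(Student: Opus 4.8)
The plan is to prove the two parts of Theorem~\ref{orderable} by reducing the right-orderable case to the residually solvable (respectively residually polycyclic) case, using Morris's theorem as the bridge. The key structural input is the following result of D.~Witte Morris~\cite{morris:amenable_gps_acting_on_line}: every finitely generated amenable left-orderable (equivalently right-orderable) group is \emph{indicable}, i.e.\ admits a surjection onto $\mathbb Z$; and in fact it is residually (finitely generated torsion-free nilpotent)-by-(finite rank solvable), so that by iterating one obtains that such a group is residually solvable. The point is that a group of subexponential growth is amenable (by the theorem of Adelson-Velskii quoted in Section~\ref{amen}), so any right-orderable group $G$ to which the hypotheses of the Gap Conjecture apply is automatically amenable, and Morris's theorem then places it inside the world where Wilson's results (Theorem~\ref{wilson6}) and the reduction to residually polycyclic groups apply.

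For part (i), I would argue as follows. Suppose $G$ is a finitely generated right-orderable group with $\gamma_G(n)\prec e^{n^{1/6}}$ (for $G^*(1/6)$, suppose instead that $G$ is not virtually nilpotent and aim for the lower bound $\gamma_G(n)\succeq e^{n^{1/6}}$). In either formulation $G$ has subexponential growth, hence is amenable. By Morris's theorem the finitely generated amenable right-orderable group $G$ is residually solvable. Now Theorem~\ref{wilson6} asserts that $G^*(1/6)$ holds for residually solvable groups. Applying it directly to $G$ gives the conclusion: either $G$ has polynomial growth (for $G(1/6)$) or the lower bound $\gamma_G(n)\succeq e^{n^{1/6}}$ holds (for $G^*(1/6)$). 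The only genuine work here is invoking Morris's theorem to certify residual solvability; everything else is packaging.

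For part (ii), the reduction is the same in spirit but now targets the sharp exponent $1/2$. Assume the Gap Conjecture $G(1/2)$ (resp.\ $G^*(1/2)$) is known for residually polycyclic groups, and let $G$ be finitely generated, right-orderable, with $\gamma_G(n)\prec e^{\sqrt n}$ (resp.\ not virtually nilpotent). As before, subexponential growth yields amenability, and Morris's theorem gives that $G$ is residually solvable. The issue is that residual solvability is weaker than residual polycyclicity, so I cannot immediately feed $G$ into the assumed residually-polycyclic case. The remedy is to upgrade residual solvability to residual polycyclicity for groups of slow growth: one uses Rosset's theorem (Theorem~\ref{rosset}) to control the solvable quotients. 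Concretely, for each normal subgroup $N\lhd G$ with $G/N$ solvable, subexponential growth of $G$ forces $N$ to be finitely generated by Theorem~\ref{rosset}; iterating along a derived series of $G/N$ (each successive quotient being abelian hence of polynomial growth, and each kernel finitely generated) shows that each solvable quotient $G/N$ is in fact polycyclic. Taking the intersection over the defining family of solvable-quotient kernels exhibits $G$ as residually polycyclic. Hence the assumed validity of $G(1/2)$ (resp.\ $G^*(1/2)$) for residually polycyclic groups applies to $G$, completing the proof.

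The main obstacle is precisely this last upgrade in part (ii): showing that, under subexponential (or $\prec e^{\sqrt n}$) growth, the residually solvable structure supplied by Morris refines to a residually polycyclic one. Making the iteration along the derived series rigorous requires care that the finite generation produced by Rosset's theorem at each stage is inherited by the relevant subquotients and that one genuinely obtains polycyclic, not merely solvable, quotients; one must check that each abelian section is finitely generated (so cyclic-by-cyclic chains of finite length exist) rather than, say, an infinitely generated torsion-free abelian group. I expect that the combination of Rosset's finite-generation theorem with the fact that finitely generated abelian groups are polycyclic resolves this cleanly, but this is the step where the argument has real content, as opposed to the essentially formal reduction in part~(i).
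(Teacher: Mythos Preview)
Your overall strategy is the right one (Morris plus Rosset plus Wilson), but there is a genuine gap: you assert that Morris's theorem makes $G$ residually solvable, and this is not what Morris proves. Morris's theorem gives only \emph{indicability}: a finitely generated amenable right-orderable group surjects onto $\mathbb Z$. Iterating---using Rosset at each stage so that the successive kernels stay finitely generated---produces a strictly descending characteristic chain $G>G_1>G_2>\cdots$ with free-abelian successive quotients, but nothing guarantees that $\bigcap_i G_i=\{1\}$. (Local indicability does not imply residual solvability in general: Thompson's group $F$ is locally indicable, yet $F'$ is simple, so the derived series stabilizes at $F'\neq 1$.) Thus in part (i) you cannot simply feed $G$ itself into Theorem~\ref{wilson6}, and in part (ii) the same issue blocks the passage to residually polycyclic: you only ever know that the \emph{quotient} $G/G_\omega$ is residually polycyclic, not $G$.

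What the paper does to close this gap is an extra argument you omit entirely, the Splitting Lemma (Lemma~\ref{splitting}). One passes to $G/G_\omega$, which \emph{is} residually polycyclic by construction; Wilson (for (i)) or the standing hypothesis (for (ii)) then forces $G/G_\omega$ to be virtually nilpotent of some polynomial degree $d$. But $G/G_\omega$ carries the image of the infinite chain with torsion-free abelian quotients, and the Splitting Lemma says each such quotient drops the polynomial degree by at least one, contradicting the chain being infinite. Hence the chain must have terminated in finitely many steps, and $G$ is polycyclic (so virtually nilpotent by Milnor--Wolf). Your write-up treats part (i) as ``essentially formal'' and locates the only real difficulty in upgrading solvable to polycyclic in (ii); in fact the substantive step in both parts is exactly this Splitting-Lemma termination argument, not the polycyclic upgrade (which, as you note, is immediate from Milnor--Wolf once one has a finitely generated solvable quotient of subexponential growth).
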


\begin{proof}

(i)   Let $G$ be a finitely generated right orderable group with growth $\prec e^{n^{1/6}}$.   In
\cite{morris:amenable_gps_acting_on_line} D.~Morris proved that every finitely generated right orderable
amenable group is  indicable (i.e. can be mapped onto $\mathbb Z$). As by Adelson-Velskii theorem
\cite{adelson:banach57} a group of intermediate growth is amenable, we conclude  that the abelianization
$G_{ab}=G/[G,G]$ is infinite and hence has a decomposition $G_{ab}=G^{-}_{ab}\oplus G^{+}_{ab}$ where
$G^{-}_{ab}\simeq \mathbb Z ^d, d \geq 1$ is a torsion free part of an abelian group and $G^{+}_{ab}$ is a
torsion part.   Let $N \lhd G$  be a normal subgroup such that $G/N=G^{-}_{ab}$. Since the commutator
subgroup of a group is a characteristic group and the torsion free part of  abelian group also is a
characteristic subgroup we conclude that $N$ is a characteristic subgroup of $G$.
 By Theorem~\ref{rosset}  $N$ is a finitely generated group.  Therefore we can proceed with $N$ as we did
 with $G$.  This allows us to get a descending chain

 \begin{equation} \label{chain} G > G_1>G_2> \dots
 \end{equation}
(where $G_1=N$ etc)  of
 characteristic subgroups with the property that  $G_i/G_{i+1}\simeq \mathbb Z^{d_i}$ if $G_{i+1} \neq
 \{1\}$,
 for some sequence $d_i \in \mathbb N, i=1,2,\dots$.

 If the chain (\ref{chain}) terminates after finitely many steps then  $G$ is solvable and by the results of Milnor and Wolf
 \cite{milnor:solv68,wolf:growth68} $G$ is
 virtually nilpotent in this case.

  Suppose that chain (\ref{chain}) is infinite and consider the intersection $G_{\omega}=\bigcap_{i=1}^{\infty}
 G_i$.
 If $G_{\omega}=\{1\}$, then the group $G$ is residually solvable (in fact residually polycyclic), and, because
 of restriction on growth, by    Theorem~\ref{wilson6}, $G$ is virtually nilpotent and hence has polynomial
 growth of some degree $d$.  But this
contradicts   Splitting Lemma  \ref{splitting}. Therefore $G_{\omega} \neq \{1\}$. $G/G_{\omega}$ is
residually polycyclic, has growth not greater than the growth of $G$ and by previous argument is virtually
nilpotent. If the degree of polynomial growth of $G/G_{\omega}$ is $l$ then again by Splitting Lemma the
length of the chain (\ref{chain}) can not be larger than $l$, and we  get a contradiction. The part (i) of
the theorem is proven.

Now the proof of part  (ii) follows immediately.  If we assume  that $G$ has growth $\prec e^{\sqrt{n}}$ and
that the Gap Conjecture holds for the class of residually polycyclic groups then the arguments from previous
part (i) are applicable in the same manner. The only difference is that  instead of  Theorem~\ref{wilson6}
one should use the assumption that the Gap Conjecture holds for residually polyciclic groups.  The same
argument works in the case of conjecture $G^*(1/2)$.

\end{proof}

\section{Gap Conjecture  and just-infinite groups}

  There is a strong  evidence  based on considerations presented  below that the Gap Conjecture
  can be reduced to   three classes of
groups: \emph{simple} groups, \emph{branch} groups and  \emph{hereditary just-infinite}  groups. These three
types of groups appear in a natural partition  of the class of just-infinite groups
  into three subclasses described in Theorem~\ref{just-inf}.
 The following statement is an
easy application of Zorn's lemma.

\begin{prop} \label{quotient}
Let $G$ be a finitely generated infinite group.  Then $G$ has a just-infinite quotient.
\end{prop}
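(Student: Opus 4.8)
The plan is to realize the required quotient as $G/M$, where $M\lhd G$ is \emph{maximal} among normal subgroups of infinite index. Accordingly, first I would introduce the poset
\[
\mathcal N=\{\,N\lhd G : [G:N]=\infty\,\},
\]
ordered by inclusion. Since $G$ is infinite, the trivial subgroup satisfies $\{1\}\in\mathcal N$, so $\mathcal N$ is nonempty. The whole proof then reduces to an application of Zorn's lemma to $\mathcal N$, and the substantive work is to show that every chain in $\mathcal N$ admits an upper bound lying in $\mathcal N$.

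The hard part — and the only place the finite-generation hypothesis is used — is precisely this chain condition. Given a chain $\{N_\alpha\}$ in $\mathcal N$, I would set $N=\bigcup_\alpha N_\alpha$; because the $N_\alpha$ are totally ordered by inclusion, $N$ is again a normal subgroup, and the sole issue is whether $N$ still has infinite index. I would argue by contradiction: suppose $[G:N]<\infty$. Then $N$ is finitely generated, since a finite-index subgroup of a finitely generated group is finitely generated. Fixing a finite generating set $h_1,\dots,h_k$ of $N$, each $h_i$ lies in some $N_{\alpha_i}$, and as the family is a chain there is a single index $\alpha^*$ with $h_1,\dots,h_k\in N_{\alpha^*}$. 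Consequently $N=\langle h_1,\dots,h_k\rangle\subseteq N_{\alpha^*}\subseteq N$, so $N_{\alpha^*}=N$ would have finite index, contradicting $N_{\alpha^*}\in\mathcal N$. Hence $[G:N]=\infty$, so $N\in\mathcal N$ is the desired upper bound. (It is worth stressing that this step genuinely needs $G$ finitely generated: without it the union of a chain of infinite-index normal subgroups can have finite index, and the statement can fail.)

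With chains bounded, Zorn's lemma produces a maximal element $M\in\mathcal N$, and it remains only to verify that $G/M$ is just-infinite. By construction $G/M$ is infinite. Every normal subgroup of $G/M$ has the form $M'/M$ with $M\le M'\lhd G$, and the correspondence theorem gives $(G/M)/(M'/M)\cong G/M'$. If such a factor $M'/M$ is nontrivial, then $M\subsetneq M'$, so maximality of $M$ forces $M'\notin\mathcal N$, i.e. $[G:M']<\infty$; equivalently $M'/M$ has finite index in $G/M$. Thus every nontrivial normal subgroup of $G/M$ is of finite index, which together with the infinitude of $G/M$ is exactly the assertion that $G/M$ is just-infinite, completing the proof.
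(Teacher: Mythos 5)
Your proof is correct and follows exactly the route the paper has in mind: the paper states Proposition \ref{quotient} as ``an easy application of Zorn's lemma'' without giving details, and your argument is precisely that application, applied to the poset of infinite-index normal subgroups. You also correctly identify and justify the one nontrivial point---that finite generation forces the union of a chain of infinite-index normal subgroups to again have infinite index---which is where the hypothesis on $G$ is genuinely needed.
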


\begin{cor}  Let $\mathcal{P}$ be a group theoretical property preserved under taking quotients. If there
is a finitely generated  group satisfying the property  $\mathcal{P}$  then there is a just-infinite group
satisfying this property.
\end{cor}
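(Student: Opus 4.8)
The plan is to invoke Proposition~\ref{quotient} directly, so the argument is very short. Suppose $G$ is a finitely generated group satisfying $\mathcal{P}$. Since a just-infinite group is infinite by definition, the substantive case is the one in which $G$ itself is infinite, and I would assume this at the outset (it holds automatically in every application we have in mind, where $\mathcal{P}$ is a growth-type property incompatible with finiteness). Under this assumption, Proposition~\ref{quotient} supplies a just-infinite quotient $Q=G/N$ for some normal subgroup $N \lhd G$.

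From here the two things to check are routine. First I would observe that $Q$ is itself finitely generated, being a homomorphic image of the finitely generated group $G$, so that the notion of ``just-infinite'' applies to $Q$ in the intended sense and $Q$ is a legitimate candidate. Second, since $\mathcal{P}$ is by hypothesis preserved under passage to quotients and $Q$ is a quotient of $G$, the group $Q$ inherits property $\mathcal{P}$. Hence $Q$ is a just-infinite group satisfying $\mathcal{P}$, which is exactly the conclusion sought.

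There is essentially no genuine obstacle in this proof; the single point that needs care is the one flagged above, namely that Proposition~\ref{quotient} yields a just-infinite quotient only when the group is infinite. If one wishes the corollary to hold for a completely arbitrary quotient-closed property $\mathcal{P}$, then infiniteness of the witnessing group should be read into the hypothesis, since a property realizable only by finite groups would produce no just-infinite example at all. In the intended context this caveat is vacuous, so I would either state the infiniteness assumption explicitly or remark that it is automatic for the properties under consideration.
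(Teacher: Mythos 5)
Your proposal is correct and is exactly the paper's (implicit) argument: the corollary is stated as an immediate consequence of Proposition~\ref{quotient}, obtained by passing to a just-infinite quotient and using quotient-closure of $\mathcal{P}$, just as you do. The infiniteness caveat you flag is real but is resolved by the paper's standing convention, stated in the preliminaries, that all groups considered are infinite finitely generated groups.
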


Although the property of a group to have intermediate growth is not preserved when passing to a quotient
group (the image may have  polynomial growth), by  theorems of Gromov~\cite{gromov:poly_growth} and Rosset
\cite{rosset:76}, if the quotient $G/H$ of a group $G$ of intermediate growth has polynomial growth then $H$
is a finitely generated group (of intermediate growth, as the extension of a virtually nilpotent group by a
virtually nilpotent group is an elementary amenable group and therefore can not have intermediate growth),
and one may look for a just-infinite quotient of $H$ and iterate this process in order to represent $G$ as a
consecutive extension of a chain of groups that are virtually nilpotent or just-infinite groups. This
observation  was used in the previous section for the proof of Theorem~\ref{orderable} and is the base of the
arguments for Theorems~\ref{orderable},~\ref{polyciclic},~\ref{resid} and ~\ref{just-inf}.

Recall that hereditary just-infinite groups were already defined in section \ref{amen}.  We call a just
infinite group near simple if  it contains a subgroup of finite index which is a direct product of finitely
many copies of a simple group.

   Branch groups   are groups that have a faithful level transitive action on an infinite spherically homogeneous rooted tree $T_{\bar m}$ defined by a sequence
   $\{m_n\}_{n=1}^{\infty}$ of natural numbers $m_n \geq 2$  (determining the branching number  for vertices of level
   $n$) with the property that the rigid stabilizer $rist_G(n)$ has finite index in $G$ for each $n\geq 1$.
   Here by  $rist_G(n)$ we mean a subgroup $\prod_{v \in V_n} rist_G(v_n)$  which is a product of rigid
   stabilizers $rist_G(v_n)$ of vertices $v_n$ taken over the set $V_n$ of all vertices of level $n$, and
    $rist_G(v)$ is a subgroup of $G$ consisting of elements fixing the vertex $v$ and acting trivially
   outside the full subtree with the root at $v$.   For a more detailed discussion of this notion see
   \cite{grigorch:jibranch,bar_gs:branch}.
    This is a geometric definition.  It follows immediately from the definition that branch groups are infinite.
 The definition of an algebraically branch group can be found in~\cite{grigorch:branch,bar_gs:branch}. Every
geometrically branch group is algebraically branch but not vice versa. If $G$ is algebraically branch  then
it has a quotient $G/N$ which is  geometrically branch. The difference between  two versions of the
definitions is not large but still there is no complete understanding how much the two classes differ (it is
not clear what can be said about the  kernel $N$,  it is believed that it should be central in $G$). For
just-infinite branch groups the algebraic and geometric definitions are equivalent. Not every branch group is
just-infinite, but every proper quotient of a branch group is virtually abelian \cite{grigorch:jibranch}.
Therefore branch groups are ``almost just-infinite'' and most of known finitely generated branch groups are
just-infinite.   Observe that a finitely generated virtually nilpotent group is not branch. This follows for
instance from the fact that a finitely generated nilpotent group satisfies a minimal condition for normal
subgroups while a branch group not.

The next theorem  was derived by the author from a result of Wilson~\cite{wilson:ji71}.

\begin{thm}\label{just-inf}~\cite{grigorch:jibranch}
The class of just-infinite groups naturally splits into three subclasses: (B) branch just-infinite groups,
(H)  hereditary just-infinite groups, and (S) near-simple just-infinite groups.
\end{thm}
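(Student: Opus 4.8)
The plan is to obtain the theorem as the group-theoretic translation of Wilson's analysis of the subnormal structure of just-infinite groups in~\cite{wilson:ji71}; the real work of the derivation is to match Wilson's structural types with the classes (B), (H), (S) recalled above. Let $G$ be just-infinite, so that the normal closure $H^{G}$ of every nontrivial subnormal subgroup $H$ has finite index in $G$, and $G$ has no nontrivial finite normal subgroup (a nontrivial normal subgroup has finite index and is therefore infinite). First I would dispose of the virtually abelian case: if $G$ has a nontrivial abelian normal subgroup $A$, then $A$ has finite index, $G$ is virtually abelian, hence residually finite and of polynomial growth, and such groups are placed in the hereditary just-infinite class~(H) (their elementary amenable, hereditarily just-infinite representatives being exactly $\mathbb Z$ and $D_{\infty}$ by Theorem~\ref{element}). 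I then assume henceforth that $G$ has no nontrivial abelian normal subgroup, so in particular $Z(G)=\{1\}$.

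The core of the argument is the structural input of~\cite{wilson:ji71}: one produces a canonical descending chain of finite-index normal subgroups
\[ G=K_{0}\geq K_{1}\geq K_{2}\geq\cdots, \]
in which each $K_{n}$ is a direct product $L_{n,1}\times\cdots\times L_{n,m_{n}}$ of the finitely many $G$-conjugates of a subnormal subgroup $L_{n,1}$, these factors being permuted transitively by $G$, and each term refining the preceding one. The existence of such a decomposition rests on the classical fact that two distinct subnormal subgroups minimal for a given property either coincide or commute with trivial intersection, combined with just-infiniteness to force finite index and hence only finitely many conjugates; equivalently, it is encoded in the Boolean \emph{structure lattice} of $G$, on whose atoms $G$ acts by permutations. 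This is precisely the content I would quote from Wilson rather than reprove.

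The trichotomy then reads off the \emph{depth} of this chain. If the refinement continues nontrivially to infinite depth, then $m_{1}m_{2}\cdots m_{n}\to\infty$, the blocks organize into a spherically homogeneous rooted tree $T_{\bar m}$, and the $K_{n}$ become the rigid level stabilizers $\mathrm{rist}_{G}(n)$, of finite index; thus $G$ is branch, class~(B). If instead the chain stabilizes at some finite depth $N$, write $K_{N}=L\times\cdots\times L$ ($m$ conjugate copies of a factor $L$ admitting no further refinement). One checks that $L$ is just-infinite and that non-refinability means $L$ contains no pair of disjoint nontrivial conjugate subnormal subgroups. Consequently either $L$ contains a minimal nontrivial subnormal subgroup $S$ of $G$, which is simple (any proper nontrivial normal subgroup of $S$ would be a strictly smaller subnormal subgroup of $G$) and which non-refinability forces to equal $L$, so that $L$ is simple and $G$ is near-simple, class~(S); or $L$ contains no minimal nontrivial subnormal subgroup, every nontrivial subnormal subgroup inside $L$ has finite index, and $L$ is residually finite with all finite-index subgroups just-infinite, i.e. hereditary just-infinite, class~(H).

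The main obstacle, and where Wilson's work truly carries the weight, is the passage between the algebraic and the geometric pictures in the infinite-depth case: reconstructing a genuine faithful level-transitive action on $T_{\bar m}$ with $\mathrm{rist}_{G}(n)$ of finite index out of the abstract lattice of conjugate direct factors, and verifying that this coincides with the branch condition recalled above. A secondary difficulty is to show that the branching data $\{m_{n}\}$ and the terminal factor $L$ are well defined up to finite index, so that the three cases are genuinely disjoint and exhaustive and the asserted partition of the class of just-infinite groups is well posed; this is also the point at which the virtually abelian groups of the first paragraph must be reconciled with the hereditary just-infinite class with some care.
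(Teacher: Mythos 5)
The first thing to say is that the paper contains no proof of Theorem~\ref{just-inf} at all: it is quoted from \cite{grigorch:jibranch}, where it is obtained from Wilson's theory of subnormal structure \cite{wilson:ji71}. Your outline --- basal subgroups, the Boolean structure lattice on whose atoms $G$ acts, the dichotomy between infinite lattice (yielding the tree, the identification of the $K_n$ with $rist_G(n)$, and the branch case) and finite lattice (yielding a finite-index normal subgroup $L\times\cdots\times L$ with $L$ simple or hereditary just-infinite) --- is precisely that derivation, and quoting Wilson for the lattice theory and for the reconstruction of the tree action is a legitimate division of labor.

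There is, however, a genuine error, and it sits in your first paragraph. Under the paper's definition, class (H) consists of groups that are \emph{themselves} hereditary just-infinite, i.e.\ residually finite with every finite-index subgroup just-infinite. Virtually abelian just-infinite groups are in general not of this kind: take $G=\mathbb Z^2\rtimes \mathbb Z_4$, with the generator of $\mathbb Z_4$ acting by rotation by $\pi/2$. This $G$ is just-infinite (any nontrivial rotation-invariant subgroup of $\mathbb Z^2$ contains $\langle v, rv\rangle$ for some $v\neq 0$, hence has finite index; and a normal subgroup $N$ with $N\cap\mathbb Z^2=1$ satisfies $[N,\mathbb Z^2]\leq N\cap\mathbb Z^2=1$, so $N\leq C_G(\mathbb Z^2)=\mathbb Z^2$, so $N=1$), but its finite-index subgroup $\mathbb Z^2$ is not just-infinite, so $G$ is not hereditary just-infinite; it is not branch (the paper observes that finitely generated virtually nilpotent groups are never branch), and it is not near-simple. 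Your own appeal to Theorem~\ref{element} makes the contradiction explicit: $G$ is elementary amenable, so if it lay in class (H) as literally defined it would be $\mathbb Z$ or $D_\infty$, which it is not. So ``placing'' all virtually abelian just-infinite groups in (H) is not a disposal of a case but a false step, and the reconciliation you postpone to the final paragraph cannot be carried out for the statement as you read it. The correct statement --- and the one your own finite-depth analysis actually delivers --- is the one in \cite{grigorch:jibranch}: in cases (H) and (S) the group contains a \emph{normal subgroup of finite index of the form} $L\times\cdots\times L$ ($m$ copies, permuted transitively by conjugation) with $L$ hereditary just-infinite, respectively simple; the multiplicity $m\geq 2$ must be allowed in the hereditary case exactly as in the simple case (in the example above $K_1=\mathbb Z\times\mathbb Z$, $m=2$, $L=\mathbb Z$). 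With that reading the virtually abelian case needs no separate treatment: it falls under (H) via $\mathbb Z^n=\mathbb Z\times\cdots\times\mathbb Z$. Two smaller soft spots in your finite-depth paragraph, both buried in what you quote from Wilson but worth flagging: ``one checks that $L$ is just-infinite'' does not follow from the quick normal-closure argument (for nontrivial $M\lhd L$, the closure $M^G$ has finite index in $G$, but $M^G\cap L$ is the closure of $M$ under the stabilizer of the factor $L$, not $M$ itself), and ``no minimal nontrivial subnormal subgroup, [hence] every nontrivial subnormal subgroup inside $L$ has finite index'' is a non sequitur as written, as is the residual finiteness of $L$ in that case.
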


It is already known that  there  are  finitely generated  branch  groups of intermediate growth. For
 instance,  groups $\mathcal G_{\omega}$ of intermediate growth from the articles \cite{grigorch:continuum84,grigorch:degrees85} are of this type.
 In fact, all known examples of groups of intermediate growth are of branch type or are reconstructions on the
 base of groups of branch type.
The question about  existence of amenable but non-elementary amenable hereditary just-infinite group is still
open (remind that by Theorem~\ref{element} the only elementary amenable hereditary just-infinite groups are
$\mathbb{Z}$ and $D_{\infty}$).

\begin{prob}\label{medyn}
Are there  finitely generated hereditary just-infinite groups of intermediate growth?
\end{prob}

\begin{prob}
Are there  finitely generated  simple groups of intermediate growth?
\end{prob}

The next  theorem is a â  straightforward corollary of the main result of Bajorska and Makedonska from
\cite{bajorska_maked:note07} (observe that it was not stated in \cite{bajorska_maked:note07}). Here we
suggest a different   proof which is adapted to the needs of the proof of the main Theorem~\ref{just-inf1}.

\begin{thm}\label{resid}
If  the Gap Conjecture or conjecture $G^*(1/2)$ holds for the classes of  residually finite groups and simple
groups, then the corresponding conjecture holds for the class of all groups.
\end{thm}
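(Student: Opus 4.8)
The plan is to prove the statement by a descent argument driven by the trichotomy for just-infinite groups. Let $G$ be finitely generated with $\gamma_G\prec e^{\sqrt n}$ and suppose it is not virtually nilpotent, aiming for a contradiction (for the starred version, assume instead only that $G$ is not virtually nilpotent and aim for the lower bound $\gamma_G\succeq e^{\sqrt n}$). Subexponential growth rules out a free subsemigroup, so Milnor's lemma and its descendants (Theorems~\ref{rosset} and~\ref{rosset1}) are available. First I would isolate the engine of the argument: if $H$ is any finitely generated infinite group of subexponential growth, pick a just-infinite quotient $Q$ by Proposition~\ref{quotient} and feed it into Theorem~\ref{just-inf}. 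Two of the three cases are killed by the two standing hypotheses. A branch $Q$ is residually finite but never virtually nilpotent (a finitely generated virtually nilpotent group is not branch), so the Gap Conjecture for residually finite groups forces a contradiction. A near-simple $Q$ contains a finite-index power $S^k$ of an infinite finitely generated simple group $S$, and $S$ is a quotient of $S^k$, whence $\gamma_S\preceq\gamma_Q\preceq\gamma_H\prec e^{\sqrt n}$; the Gap Conjecture for simple groups would then make $S$ virtually nilpotent, impossible for an infinite simple group. Thus $Q$ is hereditary just-infinite, hence residually finite, hence virtually nilpotent by hypothesis, hence elementary amenable, so Theorem~\ref{element} pins it down to $\mathbb Z$ or $D_\infty$.

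With this engine I would run the descent. Applying it to $H_0=G$: either $G$ is already virtually nilpotent, or $G$ surjects onto $Q_1\cong\mathbb Z$ or $D_\infty$. Since $\mathbb Z$ and $D_\infty$ are solvable, Rosset's Theorem~\ref{rosset} makes the kernel $H_1$ finitely generated, with $\gamma_{H_1}\preceq\gamma_G$; and if $H_1$ were virtually nilpotent then $G$, as a virtually-nilpotent-by-(virtually nilpotent) group of subexponential growth, would be elementary amenable, hence by Chou's theorem (no groups of intermediate growth in $EG$) together with Gromov's theorem it would be virtually nilpotent, a contradiction. Iterating yields a descending chain $G=H_0\triangleright H_1\triangleright H_2\triangleright\cdots$ with each $H_i$ finitely generated, not virtually nilpotent, of subexponential growth, and $H_i/H_{i+1}\cong\mathbb Z$ or $D_\infty$. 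If this chain terminates (some $H_N$ finite) then $G$ is virtually polycyclic of subexponential growth and the Milnor--Wolf theorem finishes the proof; so the crux is to exclude an infinite chain.

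To close off the infinite chain I would pass to $\bar G=G/G_\omega$ with $G_\omega=\bigcap_iH_i$, \emph{assuming the $H_i$ can be taken normal in $G$} (see below). Each $G/H_i$ is assembled from finitely many cyclic-or-dihedral sections, hence is polycyclic and therefore residually finite, so $\bar G$ is residually polycyclic and in particular residually finite; the hypothesis for residually finite groups then makes $\bar G$ virtually nilpotent of some finite polynomial-growth degree $l$. The Splitting Lemma~\ref{splitting}, applied at each step (passing to the index-two infinite-cyclic subgroup when the section is $D_\infty$), shows that the growth degree of the image chain in $\bar G$ drops by at least one at every step, which is absurd for an infinite strictly descending chain of infinite subgroups. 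For the starred version the same scaffolding applies, except that the \emph{first} time the trichotomy produces a branch, near-simple, or non-virtually-nilpotent hereditary just-infinite quotient $Q$, the starred hypothesis gives $\gamma_Q\succeq e^{\sqrt n}$ and hence $\gamma_G\succeq e^{\sqrt n}$ at once; if no such quotient ever appears, the chain analysis runs verbatim and again forces $G$ virtually nilpotent.

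The step I expect to be the main obstacle is the bookkeeping that makes the infinite-chain case rigorous: the kernels $H_{i+1}$ are only subnormal in $G$, not a priori normal, so forming $G/G_\omega$ and embedding it into $\prod_i G/H_i$ — the move that supplies residual finiteness — is not automatic. Making this honest requires either replacing the $H_i$ by $G$-normal (ideally characteristic) subgroups carrying the same cyclic-or-dihedral sections, or controlling the normal cores $\bigcap_{g\in G}H_i^{\,g}$; this is exactly the structural input supplied by the theorem of Bajorska and Makedonska, and it is the part I expect to absorb most of the work. The remaining ingredients — the trichotomy, Theorem~\ref{element}, Rosset's theorem, Milnor--Wolf, Chou's theorem, and the Splitting Lemma — are then invoked essentially off the shelf.
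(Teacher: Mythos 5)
Your overall strategy is exactly the paper's: take a just-infinite quotient (Proposition~\ref{quotient}), run the trichotomy of Theorem~\ref{just-inf}, kill the branch and near-simple cases with the two standing hypotheses, use Theorem~\ref{element} to pin the quotient down to $\mathbb Z$ or $D_\infty$, use Theorem~\ref{rosset} to keep the kernel finitely generated, and then derive a contradiction from an infinite descending chain via residual polycyclicity and the Splitting Lemma~\ref{splitting}. However, the step you flag as ``the main obstacle'' and leave unresolved is precisely where the paper does its real work, so as written your proposal has a genuine gap: the kernels you produce are only subnormal in $G$, and without a chain of subgroups \emph{normal in $G$} whose successive quotients are polycyclic you cannot form $G/G_\omega$, cannot conclude that it is residually polycyclic (hence residually finite), and the entire infinite-chain contradiction collapses. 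Deferring to the theorem of Bajorska and Makedonska does not repair this: the paper cites their result only as an alternative source for the statement and deliberately gives a different, self-contained argument, so their theorem cannot be invoked as an off-the-shelf ingredient inside this proof.

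The fix requires no new machinery, only a characteristic-subgroup construction replacing the raw kernels. When the just-infinite quotient of the current group $G_i$ is $\mathbb Z$, do not take the kernel of that particular map; instead observe that the abelianization of $G_i$ is infinite, split it as $\mathbb Z^{d}\oplus T$ with $T$ its torsion part, and let $G_{i+1}$ be the preimage of $T$. Since the commutator subgroup and the torsion part of the abelianization are both characteristic, $G_{i+1}$ is characteristic in $G_i$ (hence, inductively, characteristic in $G$) with $G_i/G_{i+1}\cong\mathbb Z^{d}$. When the quotient is $D_\infty$, take the index-two indicable subgroup $H$, intersect the finitely many images $H^{\phi}$, $\phi\in Aut(G_i)$ (there are only finitely many subgroups of index $2$), obtaining a characteristic subgroup $H_1$ with $G_i/H_1\cong\mathbb Z_2^{t}$, and then apply the abelianization trick inside $H_1$ to get the next characteristic term with free abelian quotient. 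This produces a descending chain of subgroups characteristic in $G$ whose sections are $\mathbb Z^{d_i}$ or $\mathbb Z_2^{t_i}$, with every section of the second kind immediately followed by one of the first kind; consequently $G/G_\omega$ is honestly residually polycyclic, the residually finite hypothesis applies to it, and the Splitting Lemma applied to the infinitely many free abelian sections yields the contradiction. With this replacement your argument coincides with the paper's proof, including the remark that the $G^*(1/2)$ version runs identically under the assumption that $G$ is not virtually nilpotent and fails the lower bound $e^{\sqrt n}$.
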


\begin{proof}  Assume that the Gap Conjecture is correct for residually finite groups and for simple groups.
Let $G$ be a finitely generated group with growth $\prec e^{\sqrt n}$.  By Proposition \ref{quotient} it has
just-infinite quotient $\bar G =G/N$, which belongs to one of the three types of groups listed in the
statement of the Theorem~\ref{just-inf}.   The rate of growth of $\bar G$ is  not greater than the rate of
growth of $e^{\sqrt n}$.  The group $\bar G$  can not be near simple because in this case it will have a
subgroup $H$ of finite index with infinite finitely generated simple quotient  whose rate of growth is $\prec
e^{\sqrt n}$. This is impossible as a virtually nilpotent group can not be infinite simple.

The group $\bar G$   also can not be branch as branch groups are residually finite and  finitely generated
virtually nilpotent groups are not branch. So we can assume that $\bar G$  is hereditary just infinite and
hence residually finite. Using  the assumption of the theorem we conclude that $\bar G$  is virtually
nilpotent, and therefore elementary amenable. By Theorem~\ref{element} $\bar G$ is isomorphic either to the
infinite cyclic group or to the infinite dihedral group $D_{\infty}$.   By Theorem~\ref{rosset}  kernel $N$
is finitely generated. As the rate of growth of $N$ is less than $e^{\sqrt n}$ we can apply to $N$ the same
arguments as for $G$ in order to get a surjective homomorphism either onto $\mathbb Z$ or onto $D_{\infty}$.

 If $G/N\simeq \mathbb Z$,  then we
 repeat the first step of the proof of Theorem~\ref{orderable} replacing $N$ by a finitely generated
characteristic subgroup $N_1\lhd G$ with quotient $G/N_1 \simeq \mathbb Z ^{d_1}$ for some $d_1 \geq 1$. If
$G/N_1 \simeq D_{\infty}$ then  we slightly modify the first step.  Namely, in this case $G$ has  indicable
subgroup $H$ of index 2.   Let $H_1$ be the intersection of groups $H^{\phi}, \phi \in Aut(G)$. As there are
only finitely many subgroups of index 2 in $G$ this intersection   involves only finitely many groups and
$H_1$ is a characteristic subgroup in $G$ of  finite index of   type $2^t$ for some $ t \in \mathbb N$.
Moreover, $G/H_1\simeq \mathbb Z_2^{t}$  as the quotient $G/H_1$ is isomorphic to a subgroup of a direct
product of finitely many copies of group $\mathbb Z_2$ of order 2.  The subgroup $H_1$, being a subgroup of
index $2^{t-1}$ in $H$, is indicable and we can apply  the argument of the first step of the proof of
Theorem~\ref{orderable} getting a finitely generated  subgroup $H_2 \unlhd H_1$ characteristic in $G$ with
quotient $H_1/H_2\simeq \mathbb Z ^{d_1}$ for some $d_1 \in \mathbb N$.

Let  $G_1\lhd G$ be a subgroup $N$,$H_1$ or $H_2$ depending on the case. Proceed with $G_1 $ in a similar
fashion as we did with $G$, etc.  We get a descending chain $\{G_i\}_{i\geq 1}$ of finitely generated
subgroups characteristic in $G$. There are two possibilities.

1)  After finitely many steps we  get a group $G_i$ which is hereditary just-infinite and  elementary
amenable, and  hence infinite cyclic or $D_{\infty}$ (Theorem~\ref{element}). In this case $G$ is polycyclic
and we are done in view of the result of Milnor and Wolf on growth of solvable groups.

2) The process of construction of the chain of subgroups will continue forever. In this case we get a chain
with the property that $G_i/G_{i+1}$ is isomorphic either to (i) $\mathbb Z ^{d_i}, d_i \in \mathbb N$ or to
(ii) $\mathbb Z_2^{t_i}, t_i \in \mathbb N$. Moreover,  each step of type (ii) is immediately followed by a
step of type (i).

%The end of the proof basically is the same  as the end of the proof of Theorem~\ref{orderable}.

  Let us show that this is impossible. Let $G_{\omega}$ be the intersection
$\bigcap_{i\geq 1} G_i$.  Then $G/G_{\omega}$ is residually polycyclic and hence residually finite as every
polycyclic group is residually finite \cite{robinson:book96}. Growth of $G/G_{\omega}$ is less than $e^{\sqrt
n}$. Hence by the assumption of the theorem the group $G/G_{\omega}$ is virtually nilpotent with the rate of
polynomial growth of degree $d$ for some $d \in \mathbb N$. But this contradicts  the splitting lemma as for
infinitely many $i$ the quotients $G_i/G_{i+1}$ are isomorphic to $\mathbb Z^{d_i}$. This proves the
conjecture $G(1/2)$.

In the case of the conjecture $G^*(1/2)$ we proceed in a similar fashion.  Only at the beginning we assume
that the conjecture $G^*(1/2)$   holds for residually finite groups and for simple groups and that $G$ is a
finitely generated group of intermediate  growth whose growth does not satisfy inequality  $\gamma(n) \succeq
e^{n^{1/2}}$.

\end{proof}

Now we state and prove our main result.

\begin{thm}\label{just-inf1}
(i) If  the Gap Conjecture  with parameter $1/6$ or its $*$-version $G^*(1/6)$ holds for just-infinite groups
then the corresponding conjecture holds for all groups.

(ii) If  the Gap Conjecture  or its $*$-version $G^*(1/2)$ holds for residually polycyclic groups and for
just-infinite groups then the corresponding conjecture holds for all groups.
\end{thm}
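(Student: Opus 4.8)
The plan is to rerun the proof of Theorem~\ref{resid} essentially verbatim, replacing the two places where that argument invoked the residually finite and simple hypotheses by a single appeal to the just-infinite hypothesis, and closing the one remaining step (a residually polycyclic quotient) by Wilson's Theorem~\ref{wilson6} in part (i) and by the residually polycyclic hypothesis in part (ii).

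For part (i), I would start with a finitely generated $G$ satisfying $\gamma_G(n) \prec e^{n^{1/6}}$ and, by Proposition~\ref{quotient}, pass to a just-infinite quotient $\bar G = G/N$. Since growth does not increase under quotients, $\gamma_{\bar G}(n) \prec e^{n^{1/6}}$, so the standing hypothesis ($G(1/6)$ for just-infinite groups) makes $\bar G$ virtually nilpotent. The one genuinely new point, compared with Theorem~\ref{resid}, is to observe that a virtually nilpotent just-infinite group must be $\mathbb Z$ or $D_{\infty}$: by the trichotomy of Theorem~\ref{just-inf} it is branch, hereditary just-infinite, or near-simple, but a finitely generated virtually nilpotent group is neither branch (it satisfies the minimal condition on normal subgroups) nor near-simple (a virtually nilpotent group has no infinite simple section, exactly as in the proof of Theorem~\ref{resid}); hence it is hereditary just-infinite, and being elementary amenable it is $\mathbb Z$ or $D_{\infty}$ by Theorem~\ref{element}.

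From here the iteration is identical to Theorem~\ref{resid}. Theorem~\ref{rosset} (applicable because $G$ has subexponential growth and $G/N$ is solvable) shows $N$ is finitely generated, so I can feed $N$ back into the machine and build a descending chain $G > G_1 > G_2 > \cdots$ of finitely generated characteristic subgroups with each $G_i/G_{i+1}$ isomorphic to $\mathbb Z^{d_i}$ or to $\mathbb Z_2^{t_i}$ (the $D_{\infty}$ case refined as there). If the chain terminates, $G$ is polycyclic and hence virtually nilpotent by Milnor--Wolf. If it does not, I set $G_\omega = \bigcap_i G_i$; then $G/G_\omega$ is residually polycyclic, in particular residually solvable, with growth $\prec e^{n^{1/6}}$. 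This is exactly the step where the two parts diverge: in part (i) the parameter $1/6$ is chosen so that Wilson's Theorem~\ref{wilson6} directly forces $G/G_\omega$ to be virtually nilpotent, whereas in part (ii), where the parameter is $1/2$ and Wilson only yields $1/6$, I instead invoke the hypothesis that the conjecture holds for residually polycyclic groups. Either way $G/G_\omega$ has polynomial growth of some degree $d$, which contradicts the Splitting Lemma~\ref{splitting} because infinitely many of the quotients $G_i/G_{i+1}$ equal $\mathbb Z^{d_i}$; this rules out the infinite chain.

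The $*$-versions $G^*(1/6)$ and $G^*(1/2)$ run the same way, starting instead from a group of intermediate growth with $\gamma_G(n) \not\succeq e^{n^\beta}$ and using transitivity of $\preceq$ to propagate this condition to each quotient met along the way (were some quotient $Q$ to satisfy $e^{n^\beta} \preceq \gamma_Q$, then $e^{n^\beta} \preceq \gamma_Q \preceq \gamma_G$, contradicting $\gamma_G \not\succeq e^{n^\beta}$), so that the $*$-hypotheses apply; the chain then forces $G$ itself to be virtually nilpotent, against assumption. I expect the main obstacle to be purely the verification that a virtually nilpotent just-infinite group is $\mathbb Z$ or $D_{\infty}$, together with the bookkeeping of the $D_{\infty}$ steps inherited from Theorem~\ref{resid}; the rest is a reorganization of that proof in which the choice of parameter $1/6$ in part (i) is what lets Wilson's theorem, rather than an extra hypothesis, close the residually polycyclic step.
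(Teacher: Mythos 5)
Your proposal is correct and follows essentially the same route as the paper's own proof, which explicitly reruns the argument of Theorem~\ref{resid}: pass to a just-infinite quotient, use the hypothesis (plus the trichotomy of Theorem~\ref{just-inf} and Theorem~\ref{element}) to identify it as $\mathbb Z$ or $D_{\infty}$, build the characteristic chain via Theorem~\ref{rosset}, and kill the infinite-chain case by Wilson's theorem in part (i) and by the residually polycyclic hypothesis in part (ii), contradicting the Splitting Lemma~\ref{splitting}. The only cosmetic discrepancy is that at the residually polycyclic step the paper cites Theorem~\ref{wilson2} whereas you cite Theorem~\ref{wilson6}, which is in fact the more apt reference (it is the one used in the parallel step of Theorem~\ref{orderable}); the substance is identical.
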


\begin{proof}  (i)  The proof follows the same strategy as the proof of Theorem~\ref{resid}. Let $G$ be a  finitely generated  group with growth
$\prec e^{ n^{1/6}}$. There can be  two possibilities.

1) $G$ has a finite  descending chain $\{G_i\}_{i=1}^k$ of finitely generated characteristic in $G$ groups
with consecutive quotients $G_i/G_{i+1} \simeq \mathbb Z^{d_i}$ or $G_i/G_{i+1} \simeq \mathbb Z_2^{t_i}$,
for $i <k$ and $G_k=\{1\}$. In this case $G$ is polycyclic and hence virtually nilpotent

2)   $G$ has an infinite  descending chain $\{G_i\}_{i=1}^{\infty}$,  with the property that $G_i/G_{i+1}
\simeq \mathbb Z^{d_i}$ or $G_i/G_{i+1} \simeq \mathbb Z_2^{t_i}$,  and if $G_i/G_{i+1} \simeq \mathbb
Z_2^{t_i}$ then $G_{i+1}/G_{i+2} \simeq \mathbb Z^{d_{i+1}}$.  The group $G/G_{\omega}$, where
$G_{\omega}=\bigcap_{i\geq 1} G_i$, is residually polycyclic with growth $\prec e^{ n^{1/6}}$. Apply in this
case the result of Wilson stated in Theorem~\ref{wilson2} concluding that $G/G_{\omega}$  is virtually
nilpotent which is impossible by the splitting lemma.

(ii)  Proceed as in (i) with the only difference  that in the subcase 2) we apply the assumption that the Gap
Conjecture holds for residually polycyclic groups
% only in the case of infinite chain $\{G_i\}_{i\geq
%1}$ apply hypothesis of the part (ii) of the theorem to $G/G_{\omega}$
to conclude that this subcase is impossible.

These are arguments for $G(1/2)$ version.   The arguments for $*$-version $G^*(1/2)$   are similar.

\end{proof}

\bibliographystyle{plain}
\bibliography{mylib}

\end{document}